\documentclass[letterpaper,12pt,leqno]{article}

\usepackage{fullpage}

\usepackage{hyperref}

\usepackage{amsmath}
\usepackage{amsfonts}
\usepackage{amssymb}
\usepackage{tabularx}
\usepackage{color}
\usepackage{fancyhdr}
\usepackage{tikz-cd}
\usepackage{enumitem}
\usetikzlibrary{calc}

\def\theoremparent{section}
\usepackage{notes}

\numberwithin{equation}{theorem}

\def\A{\mathbb{A}}

\def\cF{\mathcal{F}}

\def\cL{\mathcal{L}}
\def\cM{\mathcal{M}}

\def\Com{\mathrm{Com}}

\def\d{\partial}

\def\even{{\bar{0}}}

\def\g{\mathfrak{g}}

\def\k{\mathbf{k}}

\def\M{\mathcal{M}}

\def\odd{{\bar{1}}}

\def\rmor{\mathfrak{or}}

\def\proof{\noindent{\em Proof:}\ }

\def\qed{\hfill\lower 1em\hbox{$\square$}\vskip 1em}

\def\tr{\mathrm{tr}}
\def\toto{\text{\ \raise0.2em\hbox to 0pt {$\to$}\lower0.2em\hbox{$\to$}\ }}

\def\ubf#1{\ul{\mathbf{#1}}}

\begin{document}

\title{$L$-manifolds}
\author{Slava Pimenov}
\date{\today}
\titlepage
\maketitle

\tableofcontents
\vskip 5em

\setcounter{section}{-1}
\section{Introduction}
The notion of a Frobenius manifold appears in relation to various topics in algebraic and analytic geometry, such and quantum cohomology, deformation of meromorphic connections,
unfolding of singularities and others. Of a particular interest for us here will be the formal Frobenius manifolds, that is a Frobenius
structure on the formal completion of a manifold at a point. In this situation there are two equivalent descriptions, either as an algebra over a certain cyclic operad $\Com_\infty$,
or alternatively as a genus $0$ cohomological field theory. Let us briefly recall these correspondences, for a detailed discussion of it, as well as general theory of Frobenius
manifolds we refer to \cite{Dubrovin} and \cite{Manin}.

\begin{nparagraph}
Let $X$ be a super-manifold, equipped with a flat structure, i.e. a subsheaf of flat vector fields $T^f_X \into T_X$, as well as an even symmetric non-degenerate form $g\from S^2(T_X) \to \O_X$.
A Frobenius structure on $X$ is, roughly speaking, a commutative and associative binary $\O_X$-linear operation $\circ\from S^2(T_X) \to T_X$, which in flat coordinates is described
by an even potential function $\Phi$, namely for $x, y, z \in T^f_X$ we have $g(x \circ y, z) = xyz(\Phi)$, where on the right side vector fields act as derivations on functions.

If $X$ is a formal super-manifold, then it can be identified with the completion of a super vector space $V$ at zero, equipped with a symmetric non-degenerate pairing
$g\from S^2(V) \to \k$. The potential $\Phi$ is an element of $\widehat S^\bullet (V^*)$, and its homogeneous components then can be thought of as multilinear operations on $V$.
Specifically, we have a collection of symmetric operations $m_n\from S^n(V) \to V$ for $n \ge 2$, such that the maps $Y_{n+1}(v_1, \ldots, v_{n+1}) = g(m_n(v_1, \ldots, v_n), v_{n+1})$
are $S_{n+1}$-invariant, and the potential $\Phi = \sum (Y_n / n!)$, where $Y_n$ is considered as an element of $S^n(V^*)$.
This collection of operations $\{m_n\}$ equips $(V, g)$ with a structure of an algebra over the cyclic operad
$\Com_\infty$.
\end{nparagraph}

\begin{nparagraph}
On the other hand this structure also admits a more algebro-geometric interpretation. Consider the moduli spaces $\wbar \cM_{0,n}$ of stable curves of genus zero with $n$ marked points.
The entire collection $\{\wbar \cM_{0,n}, n \ge 3\}$ forms a cyclic operad with values in algebraic manifolds, where the operadic composition is given by attaching curves along
marked points. Specifically, let $C \in \wbar \cM_{0,n+1}$ with marked points $\{x_0, \ldots, x_n\}$ and similarly $C_i \in \wbar \cM_{0, k_i + 1}$ with marked points
$\{x^i_0, \ldots, x^i_{k_i}\}$ for $1 \le i \le n$, then the map
$$
\wbar \cM_{0, n+1} \times \wbar \cM_{0, k_1 + 1} \times \cdots \times \wbar \cM_{0, k_n + 1} \to \wbar \cM_{0, (1 + \sum k_i)}
$$
at the level of points is given by first taking the disjoint union of curves $C, C_1, \ldots, C_n$ and then identifying points $x_i$ and $x^i_0$ for all $1 \le i \le n$.

At the level of cohomology this induces a structure of a cyclic co-operad consisting of spaces $H^\bullet(\wbar \cM_{0,n})$. A cohomological field theory (of genus $0$) on a space
$(V, g)$ is a co-action of this co-operad. To see the relation of this notion and the previous $\Com_\infty$-algebras let us recall the structure of the cohomology groups
of $\wbar \cM_{0, n}$, which is particularly easy to understand (unlike the case of higher genera) (\cite{Keel}). 

The combinatorial type of a stable curve $C$ of genus $0$ is given by a stable tree, where vertices correspond to the irreducible components of $C$, tails to the marked points
and two vertices are connected by an edge if and only if the corresponding irreducible components of $C$ intersect. The cohomology $H^\bullet(\wbar \cM_{0,n})$ is purely even
and can be read entirely from the stratification of the moduli spaces by the combinatorial type of the stable curve. First, consider the case of $\wbar \cM_{0,4}$, which is
isomorphic to the projective line $\P^1$. The open strata $\cM_{0,4}$ is isomorphic to $\P^1 - \{0, 1, \infty\}$, and its combinatorial type is the corolla tree with a single vertex
and four tails. The three points forming the boundary divisor correspond to the trees with two vertices, one internal edge, and the tails labeled in one of three possible ways
$(1,2 \mid 3, 4)$, $(1, 3 \mid 2, 4)$ and $(1, 4 \mid 2, 3)$.
$$
\begin{tikzpicture}
\def\u{5em}
\node at (-2*\u, 0) {$D_{(1, 2 \mid 3, 4)} \quad\quad = $};

\node (v0) at (0,0) [circle,fill,inner sep=1.5pt] {};
\node (v1) at (1*\u,0) [circle,fill,inner sep=1.5pt] {};

\draw [-] (v0) -- (v1);
\draw [-] (v0) -- (-0.75*\u,0.4*\u) node[at end,left] {$1$};
\draw [-] (v0) -- (-0.75*\u,-0.4*\u) node[at end,left] {$2$};
\draw [-] (v1) -- (1.75*\u,0.4*\u) node[at end,right] {$3$};
\draw [-] (v1) -- (1.75*\u,-0.4*\u) node[at end,right] {$4$};
\end{tikzpicture}
$$

In the cohomology ring $H^\bullet(\P^1)$ (and in fact in the Chow ring $A(\P^1)$) the classes of divisors corresponding to these three points coincide, i.e.
\begin{equation}
\label{eq_fund_keel}
D_{(1, 2 \mid 3, 4)} = D_{(1, 3 \mid 2, 4)} = D_{(1, 4 \mid 2, 3)}.
\end{equation}
Now, interpreting tails $(1, 2, 3)$ as inputs, tail $4$ as the output and assigning to each
vertex the binary operation $m_2$ it is clear that these relations express the associativity of $m_2$.

Now the relations (\ref{eq_fund_keel}) are in a sense fundamental, and imply all other relations in the cohomology $H^\bullet(\wbar \cM_{0,n})$ for all $n$. Consider
the stable forgetful maps $p_{ijkl}\from \wbar \cM_{0,n} \to \wbar \cM_{0,4}$, that forget all marked points except four points marked by $i, j, k, l$ and then stabilize the resulting
curve. The preimage of the strata $D_{(1,2\mid 3,4)}$ is the closure of the union of strata corresponding to trees with two vertices, one internal edge and the tails attached to
the vertices in all possible ways, such that tails $i,j$ are on one side and tails $k,l$ are on the other. Keel has shown that the cohomology $H^\bullet(\wbar \cM_{0,n})$ as an algebra
is generated by classes of strata corresponding to all two-vertex trees modulo the preimage of relations (\ref{eq_fund_keel}) along $p_{ijkl}$ for all quadruples $\{i, j, k, l\}$.
Interpreting these relations from the operadic point of view by assigning to a vertex with $n+1$ attached flags the operation $m_n$ we recover the higher associativity relations
in the operad $\Com_\infty$, thus establishing equivalence of the structures of a cyclic $\Com_\infty$-algebra and a cohomological field theory.
\end{nparagraph}

\begin{nparagraph}
In his book on Frobenius manifolds (\cite{Manin}) Yuri Manin asks what would be the analog of this picture if one starts with the $\Lie_\infty$-operad instead of the $\Com_\infty$.
In this paper we outline the first steps towards answering this question. First, in section \ref{sec_main_def} we give a definition of an $L$-manifold, which is the global geometric notion extending $\Lie_\infty$-algebras,
similar to how Frobenius manifold is the global version of the $\Com_\infty$-algebra. Here we have to work with the {\em curved} cyclic $\Lie_\infty$ algebras in order to have the
same pattern as in the Frobenius setting. Then in section \ref{sec_examples} we give several basic examples of $L$-manifolds, including explicit description in the cases of low odd dimension.
Then in sections \ref{sec_coh_FT} and \ref{sec_graph_com} we define spaces $H^\bullet_S$ which we use to define a version of cohomological field theory equivalent to the notion of
a curved cyclic $\Lie_\infty$-algebra. 

Of a particular interest here is that combinatorially all the constructions go parallel to the case of Frobenius manifolds and moduli spaces of stable curves. This leads us to
formulate two questions.
\end{nparagraph}

\begin{conjecture}
There exist a family of spaces (or rather Artin stacks) $\cL_{0,n}$ forming a cyclic operad, and equipped with the ``forgetful'' maps
$p\from \cL_{0,n} \to \cL_{0,m}$, for $m \le n$, as well as the ``section'' maps $x_i\from \cL_{0,n} \to \cL_{0,n+1}$ for $1 \le i \le n+1$, satisfying the obvious relations (as for $\wbar \cM_{0,n}$),
such that the spaces $H^\bullet_{\ubf{n}} \isom H^\bullet(\cL_{0,n})$.
\end{conjecture}

\begin{question}
If such spaces $\cL_{0,n}$ exist, are they moduli spaces of something? And if the answer is positive, is there a natural higher genus version of these space $\cL_{g,n}$, and what is
the corresponding higher genus notion of $\Lie_\infty$-algebra?
\end{question}

\vskip 1em
The author would like to thank BIMSA (Beijing Institute of Mathematical Sciences and Applications) for providing excellent working conditions during preparation of this paper.

\vskip 5em
\section{Main definitions}
\label{sec_main_def}

A super-manifold $X$ is a pair $(X_0, \O_X)$, where $X_0 = (X_0, \O_{X_0})$ is a classical smooth manifold in one of the usual categories, like $C^\infty$, complex-analytic, algebraic manifolds, etc.
And $\O_X = (\O_X)_\even \oplus (\O_X)_\odd$ is a $\Z/2$-graded sheaf of algebras over $X_0$, satisfying the following conditions.
\begin{itemize}
\item The quotient of $\O_X$ by the ideal generated by odd elements $\O_X / ((\O_X)_\odd)$ is isomorphic to $\O_{X_0}$.
\item Locally on $X_0$ (in appropriate topology) $\O_X \isom \O_{X_0} \tensor \Lambda^\bullet(V)$ for some free $\O_{X_0}$-module $V$, such that the generating elements from $V$ are placed in odd degree.
\end{itemize}

We denote $T_X$ the tangent bundle on $X$, in other words the $\Z/2$-graded sheaf of even and odd self-derivations of $\O_X$. And consider an even (super-)antisymmetric bilinear
form $\omega\from \Lambda^2(T_X) \to \O_X$, which induces an isomorphism $\omega'\from T_X \to T_X^*$. If we denote by $\Pi T_X$ the parity shifted tangent bundle, then giving
form $\omega$ is equivalent to giving an even symmetric bilinear form $g\from S^2(\Pi T_X) \to \O_X$, inducing an isomorphism $g' \from \Pi T_X \to \Pi T_X^*$.

A flat affine structure on a super-manifold $X$ is a subsheaf $T_X^f \into T_X$, such that it locally spans the tangent bundle, i.e. $T_X \isom T_X^f \tensor \O_X$, and for any two sections
$v, w \in T_X^f$, called flat vector fields, the Lie bracket $[v, w] = 0$. Notice that in the $C^\infty$ and complex-analytic settings we can find local coordinates $x_i$ on $X$, such that
the partial derivatives $\d_i = \d / \d x_i$ form a local basis of $T_X^f$. If we choose a covering of $X$ by charts equipped with such local coordinates, the transition maps between
charts are given by affine linear transformations.

If the bilinear form $\omega$ is compatible with the flat affine structure on $X$, then it is necessarily closed, and therefore is a symplectic form on $X$.

\begin{definition}
\label{def_lman}
\begin{enumerate}[label=\alph*)]
\item A pre-$L$-manifold is a symplectic super-manifold $(X, \omega)$ equipped with the affine flat structure $T_X^f$ compatible with the symplectic form, and an odd vector field $Q \in \Gamma(X, \Pi T_X)$.
\item
\label{def_potentiality}
A local potential is a local odd function $\Phi$ on $X$, such that the odd 1-form $\sigma = \omega'(Q) = d\Phi$. In other words $Q$ is an odd Hamiltonian vector field with Hamiltonian $\Phi$:
$$
\omega(X, Q) = X \Phi.
$$
A pre-$L$-manifold is said to be potential if it admits a global potential $\Phi$.
\item A potential pre-$L$-manifold is called an $L$-manifold if it satisfies the Lie condition (as will be explained later)
\begin{equation}
\label{eq_lie_cond}
d (\omega(Q, Q)) = 0.
\end{equation}
\end{enumerate}
\end{definition}

Let $(V, \omega)$ be a finite dimensional symplectic super vector space, then a {\em formal} $L$-manifold structure on $(V, \omega)$ is an $L$-manifold, with the underlying
space $X$ being the completion of $V$ at zero, so that the algebra of functions
$$
\O_X = \widehat{S^\bullet}(V^*) \isom \k[[x_1, \ldots, x_m]] \tensor \Lambda[\xi_1, \ldots, \xi_n],
$$
and the symplectic structure is induced by $\omega$ after identifying the tangent sheaf with $\O_X \tensor V$.

\begin{proposition}
If $(X, \omega, Q)$ is an $L$-manifold, then the super-commutator $[Q, Q] = 0$.
\end{proposition}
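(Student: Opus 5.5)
Since $Q$ is odd, $[Q,Q]=2Q^2$ is again a vector field (an even derivation of $\O_X$). The plan is to show it vanishes by relating it to the Hamiltonian vector field of the function $\omega(Q,Q)$ and then using the Lie condition (\ref{eq_lie_cond}).

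First I will observe that $Q$ preserves $\omega$, i.e. $L_Q\omega = 0$. By definition \ref{def_lman}(b) there is a global potential $\Phi$ with $\iota_Q\omega = \omega'(Q) = d\Phi$. The form $\omega$ is closed (it is compatible with the flat structure, so its coefficients are constant in flat coordinates). Cartan's formula then gives $L_Q\omega = d\iota_Q\omega + \iota_Q d\omega = dd\Phi = 0$.

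Next, apply the super Cartan identity $\iota_{[A,B]} = [L_A,\iota_B]$ with $A=B=Q$, keeping track of signs for the odd field $Q$:
$$
\iota_{[Q,Q]}\omega = L_Q\iota_Q\omega \pm \iota_Q L_Q\omega = L_Q\, d\Phi = d(Q\Phi).
$$
Here $Q\Phi = \omega(Q,Q)$ up to a sign, directly from the defining relation $\omega(Y,Q) = Y\Phi$ with $Y=Q$. Hence $\omega'([Q,Q]) = \pm d(\omega(Q,Q))$, which vanishes by the Lie condition (\ref{eq_lie_cond}). Since $\omega'$ is an isomorphism, $[Q,Q]=0$.

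As a cross-check, the same argument can be run in flat Darboux coordinates. There $Q = \sum \pm \omega^{ij}\d_j\Phi\,\d_i$ with constant $\omega^{ij}$, so $Q^2$ is the Hamiltonian vector field of $\tfrac12\{\Phi,\Phi\}$, and $\{\Phi,\Phi\}$ is proportional to $\omega(Q,Q)$.

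The only real obstacle is the sign bookkeeping in the super Cartan calculus: whether $\omega(Q,Q)$ is nonzero at all for odd $Q$ and antisymmetric $\omega$, the sign in $[L_Q,\iota_Q]$, and the fact that $\iota_Q d\Phi = Q\Phi$. None of these signs can affect the conclusion, since the argument only needs $\omega'([Q,Q])$ to be a scalar multiple of $d(\omega(Q,Q))$.
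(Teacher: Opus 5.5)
Your proof is correct, and its core idea matches the paper's: $[Q,Q]$ is the Hamiltonian vector field of $Q\Phi=\omega(Q,Q)$, and that function is constant by the Lie condition (\ref{eq_lie_cond}). What differs is how you obtain this identity.

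The paper works on functions. It writes $Qf=\{\Phi,f\}$ and applies the super Jacobi identity for the odd element $\Phi$ to get $2Q^2f=\{\{\Phi,\Phi\},f\}$. Then $\{\Phi,\Phi\}=Q\Phi=\omega(Q,Q)$ is constant, so the bracket vanishes. Your ``cross-check'' paragraph is exactly this proof.

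Your main argument works on forms instead. You get $L_Q\omega=0$ from $d\omega=0$ and the closedness of $\iota_Q\omega=d\Phi$. You then compute $\iota_{[Q,Q]}\omega=[L_Q,\iota_Q]\omega=\pm d(Q\Phi)$ and conclude by nondegeneracy of $\omega$.

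Your route makes explicit where closedness and nondegeneracy of $\omega$ are used; in the paper both are hidden inside the existence of the Poisson bracket and its Jacobi identity. The cost is that you rely on the super Cartan identities. As you note, their signs do not matter: $L_Q\omega=0$ kills the second term of $[L_Q,\iota_Q]\omega$, and the first term is a multiple of $d(\omega(Q,Q))$.
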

\proof
Denote by $\{-,-\}$ the Poisson bracket on $\O_X$ induced by the symplectic form $\omega$. Then for any function $f \in \O_X$ we have
$Q f = \{\Phi, f\}$ and therefore
$$
[Q, Q] f = 2 Q^2 f = 2\{\Phi, \{\Phi, f\}\} = \{\{\Phi, \Phi\}, f\}.
$$
On the other hand $\{\Phi, \Phi\} = Q \Phi = \omega(Q, Q)$, which according to the Lie condition (\ref{eq_lie_cond}) is a constant function. Hence the Poisson bracket
on the right hand side vanishes and we conclude that $[Q, Q] = 0$.
\qed

\begin{nparagraph}[Differential equation for $\Phi$.]
Consider a system of local flat coordinates $\{x_i\}$ and let $\{\d_i = \d / \d x_i\}$ be the corresponding basis of flat vector fields, determined by condition that
$$
df = \sum dx_i (\d_i f).
$$
To simplify notation we will also write for iterated derivatives $f_{i_1 \ldots i_k} := \d_{i_1} \ldots \d_{i_k} f$.
We denote $\omega_{ij} = \omega(\d_i, \d_j)$ and let $\omega^{ij}$ be the inverse matrix. These matrices satisfy the following symmetry relations
$$
\omega_{ij} = -(-1)^{\bar x_i \bar x_j} \omega_{ji},\quad\quad \omega^{ij} = (-1)^{(\bar x_i + 1)(\bar x_j + 1)} \omega^{ji}.
$$
Since by assumption form $\omega$ is compatible with the flat structure, the coefficients of these matrices are elements of the ground ring.
If the latter is purely even (for instance if we are working over the field $\k$) then $\omega_{ij}$ is non-zero only if $\bar x_i = \bar x_j$,
in which case the symmetry condition for $\omega^{ij}$ coincides with that for $\omega_{ij}$. To simplify the signs we will assume that we are
working over a purely even base ring.

Using this notation the potentiality condition \ref{def_potentiality} reads
$$
Q = \sum \omega^{ji} \Phi_i \d_j.
$$
And the condition (\ref{eq_lie_cond}) is equivalent to the system of non-linear differential equations for the potential $\Phi$:
\begin{equation}
\label{eq_diff_eq}
\sum_{i, j} \omega^{ji} \Phi_{ki} \Phi_j = 0.
\end{equation}
\end{nparagraph}

\begin{nparagraph}[Curved cyclic $\Lie_\infty$-algebra.]
Let $V = V_\even \oplus V_\odd$ be a finite dimensional super vector space, equipped with an even symmetric non-degenerate bilinear form $g\from S^2(V) \to \k$. Consider a collection of
antisymmetric multilinear operations $\ell_n\from \Lambda^n(V) \to \Pi^n V$, for $n \ge 0$, where $\Pi$ stands for the parity change operator. In other words
operation $\ell_n$ has arity $n$ and parity $(n\ \mathrm{mod}\ 2)$. In particular $\ell_0$ is an element of $V_\even$, which will be referred to as the {\em curvature},
and $\ell_1$ is an odd endomorphism of $V$, which will be called a {\em differential}. Let us also denote by $\ubf{n}$ the set $\{1, 2, \ldots, n\}$.
\end{nparagraph}

\begin{definition}
\label{def_ccla}
A curved cyclic $\Lie_\infty$-algebra structure on $V$, is a collection $\{\ell_n\}$, satisfying the following conditions.
\begin{enumerate}[label=\alph*)]
\item Cyclic invariance: operations $Y_{n+1} \from V^{\tensor (n+1)} \to \k$ defined as $Y_{n+1}(x_1, \ldots, x_{n+1}) = g(\ell_n(x_1, \ldots, x_n), x_{n+1})$ are $\Z/(n+1)$-invariant (in the super sense), i.e.
$$
Y_{n+1}(x_1, \ldots, x_n, x_{n+1}) = (-1)^{n + \bar x_1 (\bar x_2 + \cdots + \bar x_{n+1})} Y_{n+1}(x_2, \ldots, x_{n+1}, x_1).
$$
\item Higher Jacobi relations: for all $n \ge 0$ we have
\begin{equation}
\label{eq_Jacobi}
\sum_{\ubf{n} = S_1 \sqcup S_2} (-1)^{|S_1||S_2|}\epsilon(S_1, S_2) \ell_{|S_2| + 1}(\ell_{|S_1|}(x_i \mid i \in S_1), x_j \mid j \in S_2) = 0.
\end{equation}
\end{enumerate}
\end{definition}
Here the sum is taken over all partitions of $\ubf{n}$ into two subsets (possibly empty), and $\epsilon(S_1, S_2)$ is the sign of the permutation of inputs $\{x_i\}$,
by the shuffle of $\ubf{n}$ obtained by first arranging all the elements of $S_1$ in the increasing order followed by all the
elements of $S_2$, also in the increasing order. Notice, that the sign here also involves the degrees of the inputs $x_i$, for instance, sign
of the transposition of the two adjacent elements $(i, i+1)$ is $(-1)^{1+\bar x_i \bar x_{i+1}}$.

For convenience we will write $c := \ell_0$ for the curvature element, $d := \ell_1$ for the differential and $[x, y] = \ell_2(x, y)$ for the binary operation.
The first few identities (\ref{eq_Jacobi}) for small $n$ are as follows:
\begin{enumerate}[label=\alph*)]
\item for $n = 0$ we have $d(c) = 0$,
\item for $n = 1$ we have $d^2(x) + [c, x] = 0$,
\item for $n = 2$ we have $d[x, y] - [dx, y] - (-1)^{\bar x} [x, dy] + \ell_3(c, x, y) = 0$,
\item for $n = 3$ we have
\begin{multline*}
d \ell_3(x, y, z) + [[x, y], z] - (-1)^{\bar y \bar z}[[x, z], y] + (-1)^{\bar x(\bar y + \bar z)}[[y, z], x] + \\
\ell_3(dx, y, z) + (-1)^{\bar x} \ell_3(x, dy, z) + (-1)^{\bar x + \bar y} \ell_3(x, y, dz) + \ell_4(c, x, y, z) = 0.
\end{multline*}
\end{enumerate}

In particular, if we forget the pairing $g$ on $V$ and put curvature $c = 0$, we recover the standard notion of a homotopy Lie algebra.

\begin{proposition}
\label{prop_lman_lie}
Let $(V, \omega)$ be a finite dimensional symplectic super vector space. Then there is a bijection between the set of formal $L$-manifold structures on $(V, \omega)$
and the set of curved cyclic $\Lie_\infty$-structures on $(\Pi V, g = \omega^{\Pi})$. It sends a collection of operations $\{\ell_n\}$ to the potential

$$
\Phi = \sum_{n \ge 1} {Y_n \over n!},
$$
where $Y_n$ is considered as an element of $\Lambda^n(\Pi V^*) \isom S^n(V^*)$.
\end{proposition}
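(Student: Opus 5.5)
The plan is to translate both sides into statements about power series and compare them. First I establish the correspondence at the level of pre-$L$-manifolds with a global potential, and then I match the Lie condition (\ref{eq_lie_cond}) with the higher Jacobi relations (\ref{eq_Jacobi}).

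\textbf{Step 1: potentials and cyclic operations.} On the formal completion, every closed odd $1$-form is exact. The flat structure is the standard one and $\omega$ is constant, so a formal pre-$L$-manifold structure with vanishing constant term of the potential is the same as an odd element
$$
\Phi \in \widehat{S^\bullet}(V^*)/\k, \qquad \Phi = \sum_{n \ge 1} \Phi_n, \quad \Phi_n \in S^n(V^*).
$$
Here $Q$ is recovered from $\Phi$ by $Q = \sum \omega^{ji}\Phi_i \d_j$. The potential is unique up to an additive constant, and constants are invisible both in $Q$ and in the Lie condition. Parity shift gives $S^n(V^*) \isom \Lambda^n(\Pi V^*)$, so I regard $\Phi_n$ as an antisymmetric form $Y_n$ on $\Pi V$. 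Oddness of $\Phi$ corresponds to $Y_n$ having parity $n+1$, i.e.\ to $\ell_{n-1}$ having parity $n-1$.

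Given a fully (super)symmetric $Y_{n+1}$, I define $\ell_n$ by $g(\ell_n(x_1,\ldots,x_n), x_{n+1}) = Y_{n+1}(x_1,\ldots,x_{n+1})$, using non-degeneracy of $g = \omega^\Pi$. Conversely, cyclic invariance of $Y_{n+1}$ together with antisymmetry of $\ell_n$ in its first $n$ arguments gives full $S_{n+1}$-invariance, since $\Z/(n+1)$ and $S_n$ generate $S_{n+1}$. So $\{\ell_n\}$ satisfying condition a) of Definition \ref{def_ccla} corresponds bijectively to $\Phi$ via $\Phi = \sum Y_n/n!$. The factor $n!$ is exactly what makes the Taylor coefficients $\Phi_{i_1\ldots i_n}(0)$ equal to $Y_n(e_{i_1},\ldots,e_{i_n})$. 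I will need to verify the sign conventions in the cyclic rule under $\Pi$, but this is bookkeeping.

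\textbf{Step 2: the Lie condition.} By (\ref{eq_diff_eq}), the Lie condition says that $F := \omega(Q,Q) = \sum \omega^{ji}\Phi_i\Phi_j$ is constant, i.e.\ $F_k = 0$ for all $k$. Equivalently, every Taylor coefficient $\d_{k}\d_{i_1}\cdots\d_{i_n}F(0)$ vanishes for $n \ge 0$. I expand
$$
\d_{i_1}\cdots\d_{i_n}\d_k\Big(\sum \omega^{ji}\Phi_i\Phi_j\Big)(0)
$$
by the Leibniz rule, distributing the derivatives over the two factors $\Phi_i$ and $\Phi_j$. Each such distribution corresponds to a partition $\{i_1,\ldots,i_n,k\} = S_1 \sqcup S_2$. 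The term becomes $\omega^{ji}\, Y_{|S_1|+1}(e_i, e_{S_1})\, Y_{|S_2|+1}(e_j, e_{S_2})$. Contracting with $\omega^{ji}$ composes the operations: it gives $g(\ell_{|S_1|}(e_{S_1}), \ell_{\ldots}(\ldots))$, i.e.\ $Y_{|S_2|+2}(\ell_{|S_1|}(e_{S_1}), e_{S_2'}, e_k)$.

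By the symmetry of $\omega^{ji}\Phi_i\Phi_j$, which is symmetric in the two factors, each unordered pair $\{S_1,S_2\}$ can be reorganized so that $k \in S_2$. This produces a factor $2$, which I will divide out, using characteristic $\neq 2$ implicitly. The result is exactly $g$ applied to the left side of (\ref{eq_Jacobi}) for $x_1,\ldots,x_n$, paired with $x_k$. By non-degeneracy of $g$, and since the $e_i$ range over a basis, the vanishing of all these coefficients is equivalent to (\ref{eq_Jacobi}) for all $n$. The case $n = 0$ recovers $d(c) = 0$ from $\d_k F(0) = 0$, which is a good check.

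\textbf{Main obstacle.} The hard part is the sign bookkeeping in Step 2. Specifically, I must show that the Koszul signs coming from the super Leibniz rule, the parity shift $\Pi$, and the symmetry of $\omega^{ij}$ reproduce precisely $(-1)^{|S_1||S_2|}\epsilon(S_1,S_2)$. I would first fix the identification $S^n(V^*) \isom \Lambda^n(\Pi V^*)$ via the décalage isomorphism with an explicit sign. I would then verify the claim for $n \le 2$ against the listed identities $d(c)=0$, $d^2 x + [c,x] = 0$ and the $n=2$ relation. After that, I would argue the general case by checking that both sides transform identically under adjacent transpositions of the inputs. Since both are determined by their values on sorted inputs, this reduces the comparison to a single shuffle.
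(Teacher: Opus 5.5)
Your proposal is correct and follows essentially the same route as the paper. You identify the Taylor coefficients of $\Phi$ at the origin with the cyclic forms $Y_n$ (linear terms giving $c$, quadratic terms giving $d$, and so on), and then show that the Taylor coefficients of the differentiated Lie condition, expanded by the Leibniz rule over partitions of the derivative indices, are exactly $g$ applied to the higher Jacobi expressions. The differences are cosmetic: the paper encodes the Leibniz expansion with planar trees and starts from $\sum \omega^{ji}\Phi_{ki}\Phi_j$, so $k$ already sits on one factor and no factor of $2$ has to be divided out, and it leaves the sign check to ``direct examination'' rather than to your transposition-equivariance argument.
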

\proof
Let $\{x_i\}$ be a system of flat coordinates, and as before denote by $\{\d_i\}$ the corresponding basis of flat vector fields. Furthermore, we identify the space $\Pi V$
with the fiber over zero of the parity shifted tangent bundle, and denote by $e_i = \d_i^\Pi|_0$ the corresponding basis vectors of $\Pi V$.

For any super vector space $W$ we will fix an explicit isomorphism $\Lambda^n(W) \isom S^n(\Pi W)$ by sending
$$
w_1 \wedge w_2 \wedge \ldots \wedge w_n \mapsto (-1)^{\sum_{i = 1}^n (i - 1) \bar w_i} w_1^\Pi w_2^\Pi \ldots w_n^\Pi.
$$
Let us denote the sign on the right hand side by $\lambda(w_1, \ldots, w_n)$.

Looking at the linear terms of $\Phi$ we find that $(c, e_i) = \Phi_i(0)$, and on the other hand if we write $c = \sum C^j e_j$ we have
$$
(C^j e_j, e_i) = (-1)^{\bar x_i + 1} \omega_{ji}.
$$
Therefore $C^j = \sum \Phi_i(0) \omega^{ij}$, since $\Phi_i(0)$ can only be non-zero if $\bar x_i = 1$.
Similarly, if we write $D_i^k$ for the matrix coefficients of the differential $d = \ell_1$, i.e.
$$
d(e_i) = \sum_k D_i^k e_k,
$$
then by looking at the quadratic terms of $\Phi$ we find on one hand
$$
(d(e_i), e_j) = (-1)^{\bar x_j + 1} \Phi_{ij}(0),
$$
and on the other hand
$$
(d(e_i), e_j) = \sum_k (-1)^{\bar x_j + 1} D_i^k  \omega_{kj}.
$$
Therefore, multiplying on the right side by $\omega^{jk}$ we see that $D_i^k = \Phi_{ij}(0) \omega^{jk}$. And so the first of relations (\ref{eq_Jacobi}) $d(c) = 0$ is
equivalent to vanishing of the system (\ref{eq_diff_eq}) at the origin.

We will now show that the rest of the higher Jacobi conditions are equivalent to vanishing of the iterated derivatives of the differential equations (\ref{eq_diff_eq}) at the origin.
Therefore, since $\Phi$ is a formal series, this will imply that the equations hold.

It may be helpful to represent the formulas here graphically, using planar trees.

\begin{nparagraph}
\label{par_ribbon_graph}
Recall that a ribbon graph is a collection $(V, F, \delta, \sigma, j)$, where $V$ is the set of vertices, $F$ is the set of flags (or half-edges), $\delta\from F \to V$ is the incidence
map which attaches a flag to a specific vertex, $\sigma$ is a collection of cyclic orders $\sigma_v$ on the set $F_v := \delta^{-1}(v)$ for each vertex $v \in V$, and $j \from F \to F$ is
an involution on the set of flags which prescribes gluing of pairs of flags into edges. A planar tree is a ribbon graph without cycles.

Let $T$ be a connected planar tree, a tail of $T$ is a flag fixed by the involution $j$. An orientation of $T$ is a map $or\from F \to \{\mathrm{in}, \mathrm{out}\}$, such that all two-element
orbits of $j$ map surjectively onto $\{\mathrm{in}, \mathrm{out}\}$. If a tree has at least one tail, we will choose one of them and orient it outward, which in turn
induces an orientation on the entire tree, by requiring that every set $F_v$ has exactly one outward flag. Denote by $\mathrm{Tail}(T)$ the set of tails of $T$ and by $E(T)$ the set of
internal edges of $T$.

We will decorate the tree by assigning to every flag $f \in F$ an element $m(f)$ in the basis of flat vector fields. More precisely, if it is a tail we will decorate it
by $dx_{m(f)}$, and if it is an internal edge $(i \to j)$ we decorate it by $\omega^{ij}$. By assigning to every vertex of $T$ a function we obtain an element of
$(T^*_V)^{\tensor \mathrm{Tail}(T)}$, by reading inputs, say in a counterclockwise order, starting from the outward tail.

For example the following tree
$$
\begin{tikzpicture}
\def\u{5em}
\node (v0) at (0,0) [circle,fill,inner sep=1.5pt,label=below:$\Phi_{ki}$] {};
\node (v1) at (1*\u,0) [circle,fill,inner sep=1.5pt,label=below:$\Phi_{j}$] {};

\draw [-] (v1) -- (v0);
\draw [->] (v1) -- (0.5*\u,0) node[at end,above] {$\omega^{ji}$};
\draw [-] (v0) -- (-0.75*\u,0) node[at end,below] {$d x_k$};
\draw [->] (v1) -- (-0.5*\u,0);
\end{tikzpicture}
$$
represents the form: $\sum d x_k (\omega^{ji} \Phi_{ki} \Phi_j)$.

Differentiating this expression with respect to $x_m$ corresponds to adding an additional inward flag to either one of the vertices and then taking sum of the
resulting trees. Thus in this example we obtain

\begin{equation}
\label{eq_pic_d2}
\begin{tikzpicture}[baseline=-0.25em]
\def\u{5em}
\node (v0) at (0,0) [circle,fill,inner sep=1.5pt,label=below:$\Phi_{mki}$] {};
\node (v1) at (1*\u,0) [circle,fill,inner sep=1.5pt,label=below:$\Phi_{j}$] {};

\draw [-] (v1) -- (v0);
\draw [->] (v1) -- (0.5*\u,0) node[at end,above] {$\omega^{ji}$};
\draw [-] (v0) -- (-0.75*\u,0) node[at end,below] {$d x_k$};
\draw [->] (v1) -- (-0.5*\u,0);
\draw [-] (0, 0.75*\u) -- (v0) node[at start,left] {$d x_m$};
\draw [->] (0, 0.75*\u) --(0, 0.5*\u);

\node at (2*\u,0) {$+(-1)^{\bar x_m(1 + \bar x_k)}$};
\def\v{3.6*\u}
\node (v2) at (0+\v,0) [circle,fill,inner sep=1.5pt,label=below:$\Phi_{ki}$] {};
\node (v3) at (1*\u+\v,0) [circle,fill,inner sep=1.5pt,label=below:$\Phi_{jm}$] {};

\draw [-] (v3) -- (v2);
\draw [->] (v3) -- (0.5*\u+\v,0) node[at end,above] {$\omega^{ji}$};
\draw [-] (v2) -- (-0.75*\u+\v,0) node[at end,below] {$d x_k$};
\draw [->] (v3) -- (-0.5*\u+\v,0);
\draw [-] (1.75*\u+\v, 0) -- (v2) node[at start,below] {$d x_m$};
\draw [->] (1.75*\u+\v, 0) --(1.5*\u+\v, 0);
\end{tikzpicture}
\end{equation}

Arguing as before, if we write $\ell_n(e_{i_1}, \ldots, e_{i_n}) = B_{i_1 \ldots i_n}^k e_k$, we have
\begin{align*}
(\ell_n(e_{i_1}, \ldots, e_{i_n}), e_j) &= \lambda(e_{i_1}, \ldots, e_{i_n}, e_j) \Phi_{i_1 \ldots i_n, j}(0) \\
&= (-1)^{\bar e_j} B_{i_1 \ldots i_n}^k \omega_{kj}.
\end{align*}

Let us evaluate form in (\ref{eq_pic_d2}) at zero. Notice that for non-zero terms we can cyclically shift indices of derivatives of $\Phi$ without changing sign.
So for the first tree we have $\Phi_{imk}(0) = (-1)^{\bar x_m + \bar x_k} B_{im}^l \omega_{lk}$ and since $\bar x_m = \bar x_k$, multiplying by $\omega^{kl} e_l$
we obtain the commutator $[c,e_m]$. Similarly for the second tree we obtain $d^2(e_m)$, which together gives the second of the Jacobi identities (\ref{eq_Jacobi}).

Iterating this process of adding more new tails labeled by elements of a set $S$, it is clear that the resulting trees will be indexed by partitions of $S$ into two disjoint
subsets $S_1 \sqcup S_2$ and by direct examination we see that we end up with all the relations in (\ref{eq_Jacobi}).
\qed

\end{nparagraph}

\begin{remark}
Alternatively, the bijection between formal $L$-manifolds and curved cyclic $\Lie_\infty$-algebras can be described as follows. Let $V$ be a formal $L$-manifold with
a potential $\Phi$, then the odd 1-form $d\Phi$ is a map $T_V \to \Pi \O_V$, which is completely determined by its value on the flat vector fields, that is a map
$V \to \Pi \O_V \isom \Pi S^\bullet(V^*)$. Using isomorphism between $V$ and $V^*$ induced by the symplectic form we obtain
$$
V^* \to \Pi S^\bullet(V^*),
$$
which in turn induces an odd derivation of the symmetric algebra. The Lie condition (\ref{eq_lie_cond}) ensures that this map is square-zero, and gives
an alternative definition of the $\Lie_\infty$-algebra structure on $\Pi V$ as a differential in the Chevalley-Eilenberg complex.
\end{remark}

\begin{definition}
We say that an $L$-manifold $(X, \omega, Q)$ is isotropic if $\omega(Q, Q) = 0$.
\end{definition}

\begin{nparagraph}[Euler vector field.]
\label{par_euler}
Let $E$ be an even vector field on an $L$-manifold $X$, which is conformal, that is $\Lie_E (\omega) = D \omega$ for some constant $D$, or equivalently
$$
E (\omega(V, W)) - \omega([E, V], W) - \omega(V, [E, W]) = D \omega(V, W).
$$
We say that a conformal vector field $E$ is Euler if in addition it preserves the flat structure and satisfies condition
$$
[Q, E] = d_Q Q
$$
for some constant $d_Q$.

Since by definition of an $L$-manifold the pairing $\omega(Q, Q)$ is a constant function we have
$$
0 = E (\omega(Q, Q)) = D \omega(Q, Q) + \omega([E, Q], Q) + \omega(Q, [E, Q]) = (D - 2d_Q) \omega(Q, Q).
$$
In other words an $L$-manifold $X$ with an Euler vector field $E$ has to be isotropic unless $D = 2d_Q$.

In terms of the potential $\Phi$ the Euler vector field conditions imply that up to a constant $\Phi$ is an eigenvector of $E$:
\begin{equation}
\label{eq_euler_phi}
E \Phi = (D - d_Q) \Phi + \mathrm{const}.
\end{equation}
Indeed, differentiating along a vector field $X$, we have $X(E\Phi) = E(X \Phi) - [E, X] \Phi$. On the other hand, by definition of potential $X \Phi = \omega(X, Q)$,
and hence
\begin{multline*}
X(E \Phi) = E(\omega(X, Q)) - \omega([E, X], Q) =\\
 D \omega(X, Q) + \omega([E, X], Q) + \omega(X, [E, Q]) - \omega([E, X], Q) = (D - d_Q) \omega(X, Q),
\end{multline*}
which implies (\ref{eq_euler_phi}).

\end{nparagraph}

\vskip 5em
\section{Examples}
\label{sec_examples}
We begin with two extreme examples. First, if $X$ is a purely even manifold, then the ring of functions $\O_X$ is purely even and the potential
$\Phi$, being an odd function, has to be zero. In other words a purely even manifold doesn't admit a non-trivial $L$-manifold structure.

On the other hand, if $X$ is a purely odd manifold, then it is formal, and its ring of function is a finite dimensional exterior algebra in odd variables. Hence,
by proposition \ref{prop_lman_lie}, it is given by a purely even curved cyclic $\Lie_\infty$-algebra $\g$ with only even operations $\ell_n$, $n \ge 0$.

\begin{nparagraph}[Flat vector field $Q$.]
\label{par_flat_Q}
If $X$ is a pre-$L$-manifold, such that odd vector field $Q$ is a flat, then both the potentiality condition and
$\omega(Q, Q) = \mathrm{const}$ are automatically satisfied. Specifically, in this case the potential is given by a linear function in odd flat coordinates
$\Phi = \sum a_i \xi_i$. In terms of the $\Lie_\infty$-algebra structure, the shifted tangent bundle $\Pi T_X$ is equipped with a locally constant even section $c$, i.e. the curvature,
and the rest of the $\Lie_\infty$ operations are trivial.

\end{nparagraph}

\begin{nparagraph}[Lie super-algebra.]
\label{par_lie_alg}
For any Lie super-algebra $\g$ of dimension $(m, n)$ equipped with a non-degenerate symmetric invariant bilinear form we can construct an $L$-manifold with
the underlying superspace $X = \A^{n,m} \isom \Pi \g$, by proceeding as in proposition \ref{prop_lman_lie}. Since all the operations $\ell_n$ with $n \ge 3$
vanish, the $L$-manifold structure in the formal neighborhood of zero extends to the entire affine space. The potential in the coordinates dual to $\g$
is given by an odd cubic (homogeneous) polynomial. Of course at other points of $X$ the fibers of $\Pi T_X$ in general may have nonzero curvature and differential,
but no higher operations $\ell_n$ with $n \ge 3$.

Let us work out in more detail the case of super-algebra $\g = \gl(m,n) = \gl(V)$. Let $\{v_i\}$ be a basis of $V$, and write $e_{ij} = v_i \tensor v_j^*$ for
the corresponding basis of $\gl(m, n)$. And consider the bilinear form
$$
(e_{ij}, e_{ji}) = (-1)^{\bar v_i},
$$
which satisfies all of the above assumptions. Denote by $x_{ji}$ the flat coordinate on $X$ dual to basis element $e_{ij}$, in other words we have $\d/\d x_{ji} = \d_{ij} = e_{ij}^\Pi$.
Then the potential is given by
$$
\Phi = \sum_{(i, j, k)} (-1)^{\bar v_i + \bar v_j + \bar v_k} x_{ik} x_{kj} x_{ji}.
$$
Here the sum is taken over all triples $(i, j, k)$ up to cyclic shift. Notice that since the degree $\bar x_{ii} = 1$, the terms with $i = j = k$ do not contribute to the sum.
\end{nparagraph}

\begin{proposition}
The $L$-manifold $X$ associated to a Lie super-algebra $\g$ is isotropic.
\end{proposition}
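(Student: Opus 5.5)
We need to show $\omega(Q,Q)=0$ identically on $X=\Pi\g$. By the Lie condition, $\omega(Q,Q)=\{\Phi,\Phi\}=Q\Phi$ is a constant function. The plan is to exploit homogeneity: $\Phi$ is a homogeneous cubic polynomial in the flat coordinates, so $Q$ has quadratic coefficients and $\omega(Q,Q)$ is homogeneous of degree $4$. A constant that is homogeneous of degree $4$ must vanish.

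Concretely, I will proceed in the following steps.

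\emph{Step 1 (homogeneity of $Q$).} In flat coordinates centered at the origin, $Q=\sum\omega^{ji}\Phi_i\d_j$, where the $\omega^{ji}$ are constants. Since $\Phi$ is homogeneous cubic, each $\Phi_i$ is a homogeneous quadratic polynomial.

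\emph{Step 2 (homogeneity of $\omega(Q,Q)$).} The form $\omega$ has constant coefficients in these coordinates, so $\omega(Q,Q)=\sum\pm\,\omega^{ji}\Phi_i\Phi_j$ is a homogeneous polynomial of degree $4$. Equivalently, $\omega(Q,Q)=Q\Phi$, and $Q$ raises polynomial degree by exactly one, so $Q\Phi$ has degree $3+1=4$.

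\emph{Step 3 (conclusion).} By the Lie condition (\ref{eq_lie_cond}), established in proposition \ref{prop_lman_lie} for the structure obtained from $\g$, $\omega(Q,Q)$ is constant. A constant homogeneous polynomial of degree $4$ is zero. Evaluating at the origin gives the same conclusion, since $Q$ vanishes there: $\Phi_i(0)=0$.

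Alternatively, I could use the Euler field formalism of paragraph \ref{par_euler}. Take $E=\sum x_i\d_i$; then $D=2$ and $[Q,E]=-Q$, so $d_Q=-1$. Since $D-2d_Q=4\neq 0$, the $L$-manifold is isotropic. To apply this I need to check $[E,Q]=Q$, which again follows from the quadratic coefficients of $Q$.

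The only point needing care is that the Lie condition holds globally on $X$, not merely formally at zero. This holds because $\Phi$ is polynomial: equation (\ref{eq_diff_eq}) is a polynomial identity that vanishes identically once it vanishes formally at the origin, which proposition \ref{prop_lman_lie} guarantees via the Jacobi identity of $\g$ together with $c=0$ and $d=0$. In fact the homogeneity argument alone already shows $\omega(Q,Q)=\{\Phi,\Phi\}$ is a degree-$4$ polynomial whose derivatives (\ref{eq_diff_eq}) vanish, hence it is zero.
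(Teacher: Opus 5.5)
Your proof is correct and is essentially the paper's argument. The paper takes the Euler field $E=\sum x_i\d_i$ with $D=2$, $d_Q=-1$ and invokes the identity $0=E(\omega(Q,Q))=(D-2d_Q)\,\omega(Q,Q)$ from paragraph~\ref{par_euler}, which is exactly your observation that $\omega(Q,Q)$ is a constant that is also homogeneous of degree $4$; your ``alternative'' paragraph reproduces that argument. Your side remark that $Q$ vanishes at the origin, so the constant $\omega(Q,Q)$ equals its value $0$ there, is a more direct variant that uses only the vanishing of the curvature $c$ rather than homogeneity.
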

\proof
Let $\{e_i\}$ be a basis of $\g$ and denote by $\{x_i\}$ the corresponding system of flat coordinates on $X$, such that $\d_i = \d/\d x_i = e_i^\Pi$. Consider an even vector field
$$
E = \sum_i x_i \d_i.
$$
Clearly, we have $E x_i = x_i$ and $[\d_i, E] = \d_i$ for all $i$, and therefore $E$ is an Euler vector field with $D = 2$ and $d_Q = -1$. Since $D \neq 2 d_Q$ this implies,
as was shown in paragraph \ref{par_euler}, that $\omega(Q, Q) = 0$.
\qed

\begin{remark}
Similar argument can be used to show that if $\g$ is a cyclic curved $\Lie_\infty$ algebra, such that all operations $\ell_n$ vanish except for a single $n > 0$, then
$E$ as above is again an Euler vector field on the associated $L$-manifold $X$, with $D = 2$ and $d_Q = (1 - n)$. Therefore it is once again isotropic.

It would be interesting to characterize isotropic (formal) $L$-manifolds in terms of the corresponding $\Lie_\infty$-algebra structures.
\end{remark}

\begin{nparagraph}[$L$-manifolds of dimension $(n, 1)$.]
Consider a super-manifold $X$ of dimension $(n, 1)$, and denote by $(x_1, \ldots, x_n)$ a system of flat even coordinates on $X$ and by $\xi$ a flat odd coordinate. By rescaling $\xi$
if necessary we may assume that $\omega(\d_\xi, \d_\xi) = 1$. Any odd function $\Phi$ can be written as $\Phi = \phi(x_1, \ldots, x_n) \xi$, therefore all of the derivatives
$\Phi_{x_i}$ have factor $\xi$. The Lie condition (\ref{eq_lie_cond}) can be written as
$$
\phi(x_1, \ldots, x_n)^2 + \sum_{i,j} \omega^{ji} \Phi_{x_i} \Phi_{x_j} = \mathrm{const}.
$$
The sum in this expression contains factor $\xi^2$ and therefore vanishes, which implies that $\phi$ is a constant function. Hence in this case the vector field $Q$ is flat and we
are in the situation discussed in paragraph \ref{par_flat_Q}.
\end{nparagraph}

\begin{nparagraph}[$L$-manifolds of dimension $(n, 2)$.]
Consider a super-manifold $X$ of dimension $(n, 2)$ and, as before, let $(x_1, \ldots, x_n)$ be a system of flat even coordinates on $X$, and $(\xi_1, \xi_2)$ a pair of flat odd coordinates.
By applying a linear transformation with constant coefficients to the odd coordinates we can assume that the restriction of the form $\omega$ to flat odd vector fields is given by matrix
$$
\omega\mid_{\<\d_{\xi_1}, \d_{\xi_2} \>} \ =\ \begin{pmatrix}
0 & 1 \\
1 & 0
\end{pmatrix}.
$$
We will also denote by $\wbar\omega$ the restriction of form $\omega$ to the even manifold $X_0$, and let $\{-,-\}$ be the Poisson bracket corresponding to $\wbar\omega$.

The potential $\Phi$ can be written as
$$
\Phi = \phi_1(x_1, \ldots, x_n) \xi_1 + \phi_2(x_1, \ldots, x_n) \xi_2,
$$
for some even functions $\phi_1$ and $\phi_2$. By separating derivatives with respect to even and odd variables, the Lie condition $\omega(Q, Q) = C$ for some constant $C$
is equivalent to the following two conditions.

$$
\phi_1 \phi_2 = C,
$$
$$
\{ \phi_1, \phi_2 \} = 0.
$$

If $C = 0$ then one of the factors $\phi_1, \phi_2$ is zero (at least locally if we are working in the $C^\infty$ setting) and the second condition is obviously satisfied.
Therefore the potential has the form $\Phi = \phi \xi_1$, for an arbitrary even function $\phi$.

If $C \neq 0$ then $\phi_2 = C / \phi_1$ and again, since $\phi_1, \phi_2$ are functionally dependent, the second condition is satisfied. Therefore we have
$$
\Phi = \phi \xi_1 + {C \xi_2 \over \phi}
$$
for an arbitrary even function $\phi$.

\end{nparagraph}

\vskip 5em
\section{Cohomological field theory}
\label{sec_coh_FT}

In this section we will give a description of the curved cyclic $\Lie_\infty$-algebras in the flavor of cohomological field theory. We define graded algebras $H^\bullet_S$ for any
finite set $S$ combinatorially, and show that a coaction of this collection $\{H^\bullet_S\}$ on a super vector space $V$ is equivalent to giving a structure of a curved cyclic
$\Lie_\infty$-algebra.

It would be interesting to describe the spaces $H^\bullet_S$ as the cohomology of some moduli spaces (or more likely Artin stacks), similar to the relation between the formal
Frobenius manifolds and cohomological field theories with spaces of (co)operations being the cohomology of the moduli of stable curves $\M_{0,S}$.

\begin{nparagraph}
Let $S$ be a finite set (possibly empty) and $\sigma$ be a partition of $S$ into two disjoint subsets $S_1$ and $S_2$. Such partitions are in bijection with trees with two
vertices $v_1$ and $v_2$, one internal edge and tails labeled by elements of $S$. Specifically, given such a tree we put $S_i$ to be the set of tails attached to vertex $v_i$, for $i = 1, 2$.

For any tree $\tau$ with tails labeled by $S$ we consider a symbol $m(\tau)$ of degree $|E(\tau)|$.
Fix a vertex $v \in \tau$ with at least one adjacent flag, and a flag $f \in F_v$, and let $F_v^\circ = F_v - \{f\}$ be the set of all the remaining adjacent flags.
Then for every partition of $F_v^\circ = F_1 \sqcup F_2$ (either $F_1$ or $F_2$ can be empty)
we can construct a new tree $\tau' = \tau'(\tau, v, f, F_1, F_2)$
obtained from $\tau$ by adding a new vertex $v'$, connecting it to $v$ by an edge $e$ and attaching all flags from $F_1$ to $v'$ and all flags from $F_2$ to $v$.

For any such triple $(\tau, v \in \tau, f \in F_v)$ we can consider a linear combination
\begin{equation}
\label{eq_relations}
R(\tau, v, f) := \sum_{F_v^\circ = F_1 \sqcup F_2} (-1)^{|F_1||F_2|} \epsilon(F_1, F_2) m(\tau'(\tau, v, f, F_1, F_2)).
\end{equation}
\end{nparagraph}

\begin{definition}
The space $H^{n}_S$ is the quotient of the linear span of symbols $m(\tau)$ for all trees $\tau$ with $n$ internal edges and tails labeled by $S$, modulo subspace spanned by elements $R(\tau', v, f)$
for all trees $\tau'$ with $(n - 1)$ internal edges, vertices $v \in \tau'$ and flags $f \in F_v$.
\end{definition}

For any partition $\sigma$ of the set of tails $S = S_1 \sqcup S_2$ we can define a boundary map $\phi_\sigma\from H^\bullet_S \to H^\bullet_{S_1^+} \tensor H^\bullet_{S_2^+}$, where
$S_i^+$ denotes the disjoint union of $S_i$ and one additional element. Explicitly, if a tree $\tau$ doesn't contain an edge which induces partition of tails into subsets $S_1$ and $S_2$
then $\phi_\sigma(m(\tau)) = 0$. Otherwise, we cut $\tau$ along this edge into two trees $\tau'$ and $\tau''$ and put $\phi_\sigma(m(\tau)) = m(\tau') \tensor m(\tau'')$.

\begin{nparagraph}
For a vector space $V$ we can consider a cohomological field theory on $V$ consisting of a collection of even maps
$$
I_n \from (\Pi V)^{\tensor n} \to \Pi H^\bullet_n,
$$
which satisfy the following two conditions.
\begin{enumerate}[label=\alph*)]
\item $I_n$ is $S_n$ invariant, where $S_n$ acts on the left by permuting the factors and on the right by relabeling tails of trees.
\item Are compatible with the boundary maps in the sense that for any partition $\sigma$ of $S$ into $S_1$, $S_2$
$$
\phi_\sigma(I_n(x_1, \ldots, x_n)) = \epsilon(\sigma)(I_{S_1^+} \tensor I_{S_2^+}) \left( \bigotimes_{S_1} x_i \tensor \Delta \tensor \bigotimes_{S_2} x_j \right),
$$
where $\Delta$ denotes the Casimir element in $V \tensor V$.
\end{enumerate}

By directly comparing the definitions of spaces $H^\bullet_S$ and curved cyclic $\Lie_\infty$-algebra (\ref{def_ccla}) we arrive at the following statement.
\end{nparagraph}

\begin{proposition}
There is a bijection between the set of cyclic curved $\Lie_\infty$ structures and the set of cohomological field theories on $V$.
\end{proposition}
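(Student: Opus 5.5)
The plan is to mimic the classical correspondence between $\Com_\infty$-algebras and genus zero cohomological field theories: identify the top-degree-zero part of $I_n$ with the cyclic tensors $Y_n$, and show that the relations defining $H^\bullet_S$ are exactly the higher Jacobi relations (\ref{eq_Jacobi}).

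\emph{Step 1: reconstruct the operations from the CohFT.} First note that $H^0_S$ is one-dimensional, spanned by $m(\text{corolla}_S)$, since there are no relations in degree zero (there are no trees with $-1$ edges). Given a CohFT $\{I_n\}$, let $Y_n(x_1,\ldots,x_n)$ be the coefficient of the corolla in $I_n(x_1,\ldots,x_n)$, with the parity shift $\Pi$ accounting for the signs. $S_n$-invariance of $I_n$ makes $Y_n$ fully (super)symmetric as a function on $\Pi V$, hence in particular $\Z/n$-cyclically invariant in the twisted sense of Definition \ref{def_ccla}. Using the non-degenerate form $g$, define $\ell_{n-1}$ by $g(\ell_{n-1}(x_1,\ldots,x_{n-1}),x_n)=Y_n(x_1,\ldots,x_n)$; this includes $\ell_0=c$ from $Y_1$. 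The sign bookkeeping under $\Lambda^n(W)\isom S^n(\Pi W)$ is the one fixed in the proof of Proposition \ref{prop_lman_lie}.

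\emph{Step 2: the whole CohFT is determined by its degree-zero part.} Every tree $\tau$ is obtained from its vertex corollas by gluing, and iterating the boundary maps $\phi_\sigma$ along all internal edges of $\tau$ expresses the coefficient of $m(\tau)$ in $I_n$ as a contraction of the $Y$'s at the vertices with Casimir elements $\Delta$ on the edges. Thus $I_n$ is forced to be
$$
I_n(x_1,\ldots,x_n)=\sum_\tau \pm\, \mathrm{contr}_\tau(Y)\, m(\tau),
$$
which gives injectivity of the map CohFT $\to$ operations. Conversely, given $\{\ell_n\}$, define $I_n$ by this formula on the free span of trees. $S_n$-invariance is automatic. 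Compatibility with the $\phi_\sigma$ holds by construction, since cutting an edge splits the contraction.

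\emph{Step 3 (main obstacle): well-definedness modulo the relations $R(\tau,v,f)$.} The formula must descend to $H^\bullet_S$. Dually, this means that for each $(\tau,v,f)$ the sum over the $\tau'$ appearing in (\ref{eq_relations}) of the contraction coefficients vanishes. All of these trees share the part of $\tau$ away from $v$, so the statement localizes at $v$. Orient the flag $f$ as the output. The local sum is then
$$
\sum_{F_1\sqcup F_2}(-1)^{|F_1||F_2|}\epsilon(F_1,F_2)\,\ell_{|F_2|+1}(\ell_{|F_1|}(\cdots),\cdots),
$$
paired with $f$, which is exactly (\ref{eq_Jacobi}). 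Conversely, applying the CohFT condition to the relation for the corolla (one vertex) yields (\ref{eq_Jacobi}), via Step 1.

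The hard part is checking that the signs $(-1)^{|F_1||F_2|}\epsilon(F_1,F_2)$ in $R$, the degree $|E(\tau)|$ of $m(\tau)$, the $\Pi$ shifts, and the sign $\epsilon(\sigma)$ in the boundary compatibility all match the Jacobi signs. I would first verify the cases $n\le 3$ against the explicit identities listed after Definition \ref{def_ccla}, as was done for the two trees in (\ref{eq_pic_d2}). Then I would argue the general case by induction on the number of inputs, adding one flag at a time exactly as in the proof of Proposition \ref{prop_lman_lie}. The two constructions are mutually inverse by Steps 1 and 2, which gives the bijection.
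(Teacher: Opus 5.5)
Your proposal is correct and is essentially the paper's own argument. The paper writes down $I_S=\sum_\tau Y_\tau\, m(\tau)$, with $Y_\tau$ the contraction of the vertex tensors $Y_{|F_v|}$ along Casimir elements on the internal edges, and asserts ``by directly comparing the definitions'' that the relations $R(\tau,v,f)$ correspond to the higher Jacobi identities (\ref{eq_Jacobi}). Your Steps 1--3 spell this out, and you also make the inverse map explicit via the corolla coefficient in $H^0_S$.

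One point is worth making explicit. Your argument works in the ``dual'' reading you invoke in Step 3, where the tree coefficients of $I_n$ must pair to zero with every $R(\tau,v,f)$. On the literal quotient $H^\bullet_S$, individual tree coefficients are not well defined, and the cutting maps $\phi_\sigma$ do not preserve the relations. For example, $\phi_{(1\mid 23)}$ sends the relation for the one-vertex tree with tails $1,2,3$ and $f=3$ to $\pm$ a product of two corollas, which is nonzero in $H^0\otimes H^0$. The paper glosses over this as well.
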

\qed
Explicitly, let $\{\ell_n\}$ be the collection of $\Lie_\infty$ operations and $Y_n$ are as defined in \ref{def_ccla}. Then we put
$$
I_S(x_1, \ldots, x_n) = \sum_{\tau} Y_\tau(x_1, \ldots, x_n) m(\tau),
$$
where
$$
Y_\tau(x_1, \ldots, x_n) = \left( \bigotimes_{v \in \tau} Y_{|F_v|} \right) \left( \bigotimes_{S} x_i \tensor \Delta^{\tensor E(\tau)} \right).
$$
Here on the right side the Casimir elements $\Delta$ are indexed by the internal edges of $\tau$, and the inputs $x_i$ as well as factors of the Casimir elements are fed
into operations $Y_n$ according to the combinatorics of the tree.

\begin{nparagraph}
\label{par_basis_of_H}
The spaces $H^\bullet_S$ admit a basis who's elements can be represented by certain simple types of trees. Let us first consider the case of the empty set $S$. Then
$H^\bullet_{\ubf{0}}$ is spanned by two elements in degrees $0$ and $1$:
$$
H^\bullet_{\ubf{0}} = \left(\quad
\begin{tikzpicture}
\def\u{2em}
\node (v0) at (0,0) [circle,fill,inner sep=1.5pt] {};
\node (v1) at (2*\u,0) [circle,fill,inner sep=1.5pt] {};
\node (v2) at (3*\u,0) [circle,fill,inner sep=1.5pt] {};
\draw (v1)--(v2);
\end{tikzpicture}
\quad\right).
$$
First of all neither of these elements can be killed by the relations $R(\tau, v, f)$ in (\ref{eq_relations}). Next, consider a tree $\tau$ with
at least $3$ vertices, and assume at first that it contains a part of the form

$$
\begin{tikzpicture}
\def\u{3em}
\node (v0) at (0,0) [circle,fill,inner sep=1.5pt] {};
\node (v1) at (1*\u,0) [circle,fill,inner sep=1.5pt,label=below:$v$] {};
\node (v2) at (2*\u,0) {};
\draw (v0)--(v1) node [midway,above] {$e$};
\draw (v1)--(v2) node [midway,above] {$f$};;
\end{tikzpicture}
$$
Then the tree $\tau$ is equal (up to a factor) to $R(\tau', v, f)$, where $\tau'$ is obtained from $\tau$ by contracting edge $e$, and therefore
is killed in $H^\bullet_{\ubf{0}}$.

In general we can always find a vertex $v$ in $\tau$ of the form

\begin{equation}
\label{eq_dead_ends}
\begin{tikzpicture}[baseline=-0.25em]
\def\u{3em}
\node (v00) at (0,0.8*\u) [circle,fill,inner sep=1.5pt,label=left:$v_1$] {};
\node (v01) at (0,0.35*\u) [circle,fill,inner sep=1.5pt,label=left:$v_2$] {};
\node (v02) at (0,-0.05*\u) {$\vdots$};
\node (v03) at (0,-0.65*\u) [circle,fill,inner sep=1.5pt,label=left:$v_n$] {};
\node (v1) at (1*\u,0) [circle,fill,inner sep=1.5pt,label=below:$v$] {};
\node (v2) at (2*\u,0) {};
\draw (v00)--(v1);
\draw (v01)--(v1);
\draw (v03)--(v1);
\draw (v1)--(v2) node [midway,above] {$f$};
\end{tikzpicture}
\end{equation}
We will show by induction on $n$ that such tree vanishes in $H^\bullet_S$. Let $\tau'$ be the tree obtained from $\tau$ by contracting the edge $(v_n, v)$, then
one of the trees appearing in $R(\tau', v, f)$ will be the tree $\tau$. For the rest of the trees some of the vertices $v_i$ for $1 \le i \le (n - 1)$
are attached instead to $v_n$. In any of these cases vertex $v_n$ will have a smaller number of ``dead ends'' attached to it,
and so by inductive assumption they vanish in $H^\bullet_S$. Hence the tree $\tau$ also vanishes.

Similar considerations show that for $S = \ubf{1}$ we have one-dimensional space $H^\bullet_{\ubf{1}}$ spanned by an element in degree $0$:

$$
H^\bullet_{\ubf{1}} = \left(\quad
\begin{tikzpicture}
\def\u{2em}
\node (v0) at (0,0) [circle,fill,inner sep=1.5pt] {};
\draw (v0)--(v1);
\end{tikzpicture}
\quad\right).
$$

For $S = \ubf{2}$ the space $H^\bullet_{\ubf{2}}$ is infinite dimensional and spanned by trees $\tau_n$ in degree $n$, for $n \ge 0$.

$$
H^\bullet_{\ubf{2}} = \bigoplus_{n \ge 0} \k\tau_n,\quad\quad
\tau_n = 
\begin{tikzpicture}[baseline=-0.25em]
\def\u{2em}
\node (v0) at (0,0) {};
\node (v1) at (1*\u,0) [circle,fill,inner sep=1.5pt,label=below:$v_0$] {};
\node (v12) at (2*\u,0) [circle,fill,inner sep=1.5pt,label=below:$v_1$] {};
\node (v2) at (3*\u,0) {};
\node (v3) at (4*\u,0) {$\cdots$};
\node (v4) at (5*\u,0) {};
\node (v5) at (6*\u,0) [circle,fill,inner sep=1.5pt,label=below:$v_n$] {};
\node (v6) at (7*\u,0) {};
\draw (v0)--(v1)--(v12)--(v2);
\draw (v4)--(v5)--(v6);
\end{tikzpicture}
$$
If the tree contains a vertex, as in (\ref{eq_dead_ends}) then as we saw before, it vanishes in $H^\bullet_S$. Therefore, it is enough to consider
the following situation.

$$
\begin{tikzpicture}[baseline=-0.25em]
\def\u{3em}
\node (v00) at (0.5*\u,0) {$\cdots$};
\node (v0) at (1*\u,0) {};
\node (v1) at (2*\u,0) [circle,fill,inner sep=1.5pt,label=below:$v_i$] {};
\node (v2) at (3*\u,0) {};
\node (v3) at (3.5*\u,0) {$\cdots$};

\node (w1) at (1.4*\u, 1*\u) [circle,fill,inner sep=1.5pt,label=above:$w_1$] {};
\node (w2) at (1.8*\u, 1*\u) [circle,fill,inner sep=1.5pt,label=above:$w_2$] {};
\node (w3) at (2.2*\u, 1*\u) {$\cdots$};
\node (w4) at (2.6*\u, 1*\u) [circle,fill,inner sep=1.5pt,label=above:$w_m$] {};

\draw (v0)--(v1) node [near start,below] {$e$};
\draw (v1)--(v2) node [near end,below] {$f$};
\draw (w1)--(v1);
\draw (w2)--(v1);
\draw (w4)--(v1);
\end{tikzpicture}
$$
We will again argue by induction on $m$. Let $\tau'$ be the tree obtained from $\tau$ by contracting the end $(w_m, v_i)$, then $\tau$ is one
of the trees appearing in $R(\tau', v_i, f)$. As for the other trees we will distinguish two cases.

{\em Case 1.} Some of the vertices $w_i$, for $1 \le i \le (m - 1)$ are attached to $w_m$, while edges $e$ and $f$ remain intact. In this situation
vertex $w_m$ becomes as in (\ref{eq_dead_ends}) and therefore the tree vanishes in $H^\bullet_S$.

{\em Case 2.} The edge $e$ as well as some of $w_i$ are attached to $w_m$. Then the length of the path between tails of the tree (passing through edges $e$ and $f$) is increases by $1$
and both vertices $v_i$ and $w_m$ will have fewer dead ends, so we can proceed by induction on $m$.

In the end we arrive to a tree $\tau_n$, where $n$ is the number of internal edges of the original tree $\tau$.

These arguments can also be applied for general $H^\bullet_S$. Putting all this together we obtain the following statement.
\end{nparagraph}

\begin{proposition} We have the following description of spaces $H^\bullet_S$:
\label{prop_basis_HS}
\begin{enumerate}[label=\alph*)]
\item For $|S| = 0$ we have
$$
H^\bullet_{\ubf{0}} \ \isom\ H^\bullet(\CC P^1).
$$
\item For $|S| = 1$ we have
$$
H^\bullet_{\ubf{1}} \ \isom\ \k.
$$
\item For $|S| = 2$ we have
$$
H^\bullet_{\ubf{2}} \ \isom\ H^\bullet(\CC P^\infty).
$$
\item In general for $|S| \ge 3$ the space $H^\bullet_{\ubf{n}}$ has a basis consisting of metric stable trees, with positive (respectively non-negative) integral lengths
of internal edges (respectively tails).
\end{enumerate}
\end{proposition}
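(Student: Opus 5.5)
We prove the statement in two halves. First we show that the proposed trees \emph{span} $H^\bullet_S$; this is done by a reduction algorithm on tree symbols, extending the arguments of paragraph \ref{par_basis_of_H}. Second we show that they are \emph{linearly independent}, by constructing enough linear functionals on the span of tree symbols that vanish on every relation $R(\tau,v,f)$.

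\emph{Spanning.} The basic move, used repeatedly in paragraph \ref{par_basis_of_H}, is the following. If $\tau$ has an edge $e=(w,v)$ and a flag $f\in F_v$, let $\tau'$ be $\tau$ with $e$ contracted. Then $\tau$ occurs in $R(\tau',v,f)$, and the relation expresses $\tau$ as a signed combination of the other trees in $R(\tau',v,f)$. These are obtained by redistributing the flags at the merged vertex.

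The first reduction removes ``dead ends''. By the induction of (\ref{eq_dead_ends}), any tree containing a vertex all of whose flags but one lead to tail-less subtrees vanishes. Hence every surviving tree has the property that each leaf vertex carries a tail.

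Next, for a vertex $v$ with flags distributed among several branches, we apply the move of Case 1/Case 2. Bare vertices (those with no tails and exactly two internal edges) are interpreted as subdivisions of an edge, and a chain of such vertices is read as an edge of positive integral length. Similarly, a chain of bivalent vertices between a tail and the first branching vertex is read as a tail of non-negative length.

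The aim is to show that every tree reduces modulo relations to a combination of \emph{stable} underlying trees, meaning every branching vertex has at least three essential directions, decorated with these lengths. Concretely, we induct on the pair (number of vertices that are neither stable nor bivalent-on-a-path, total number of dead-end attachments), ordered lexicographically. At each step we must check that every term produced by a relation is strictly smaller in this order, exactly as in Cases 1 and 2 of paragraph \ref{par_basis_of_H}. For $|S|=0,1,2$ the stable skeleton degenerates, and we recover parts a)--c) directly, as already computed there: respectively one or two vertices, one vertex, and the chains $\tau_n$. The identifications with $H^\bullet(\CC P^1)$ and $H^\bullet(\CC P^\infty)$ are then just dimension counts in each degree, with degree given by the number of internal edges.

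\emph{Independence.} Here I would construct a pairing against the basis. For a metric stable tree $T$ define the functional $\chi_T$ on tree symbols by
\begin{equation*}
\chi_T(m(\tau)) = \pm 1 \text{ if } \tau \text{ is a ``normal form'' representative of } T, \qquad \chi_T(m(\tau)) = 0 \text{ otherwise.}
\end{equation*}
A cleaner route is probably to use the bijection with cohomological field theories, i.e.\ with curved cyclic $\Lie_\infty$-algebras. For each basis element one exhibits a curved cyclic $\Lie_\infty$-algebra (or a family of them) whose associated $I_S$ distinguishes it. For instance, take a free or universal example: the $\Lie_\infty$-structure on a large symplectic space whose potential $\Phi$ is a generic solution of (\ref{eq_diff_eq}), such as a sum of pieces each with a single nonzero $\ell_n$ together with curvature and differential terms. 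One then reads off the coefficients $Y_\tau$. Since every relation maps to zero under any CohFT, non-vanishing of a pairing certifies independence.

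\emph{Main obstacle.} I expect the main difficulty to be the independence step, and in particular verifying that the reduction in the spanning step has a well-defined normal form independent of the choices made. This is a confluence (Diamond Lemma) issue. My first attempt would be to orient the relations $R(\tau,v,f)$ by a rewriting order that favours long paths, as in Case 2. I would then check that all overlap ambiguities resolve; the overlaps occur only when two contractible edges share a vertex. This would yield a Gröbner-type basis whose normal monomials are exactly the metric stable trees.
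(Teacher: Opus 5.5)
\paragraph{Review.}
Your spanning half is the paper's argument. Paragraph \ref{par_basis_of_H} consists of exactly this reduction: the lemma for vertices as in (\ref{eq_dead_ends}), followed by the Case 1/Case 2 induction. That already proves a) and b), because for $|S|\le 1$ no relation lives in the degrees of the surviving trees. One correction there: state the dead-end lemma as in (\ref{eq_dead_ends}), with at least one bare branch. As you phrase it, the two-vertex tree with $S=\emptyset$ satisfies the hypothesis vacuously, yet it spans $H^1$.

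\textbf{The gap is linear independence.} Part c) needs it ($\tau_n\neq 0$ for every $n$), and d) needs it in full. None of your three routes is carried out, and the first two fail as stated.
\begin{itemize}
\item Your $\chi_T$ does not descend to $H^\bullet_S$. For $|S|=3$ in degree $1$, each relation coming from the corolla involves all four one-edge trees with coefficients $\pm1$. These are the tree with a bare leaf and the trees $\{i\mid jk\}$ (tail $i$ on one vertex, $j,k$ on the other). So $\chi_{\{1\mid 23\}}$ takes the value $\pm1$ on such a relation. Correct dual functionals must be nonzero on trees with dead ends, and their values are only known after confluence is proved.
\item In the CohFT route, an orthogonal direct sum gives, on connected trees, only sums of the summands' functionals $\tau\mapsto Y_\tau(x_1,\dots,x_n)$. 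A summand with only $c$, $d$, $\ell_n$ kills every tree having a vertex of valence outside $\{1,2,n+1\}$. It therefore cannot detect basis trees whose stable vertices have different valences, and these exist once $|S|\ge5$. Producing an algebra generic enough to have full rank on the proposed basis is as hard as the statement.
\end{itemize}

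\paragraph{The Diamond-Lemma route works, but it is the proof, not a final check.}
Take as leading term of $R(\tau',v,f)$ its $F_1=\emptyset$ term, the only one with a new bare leaf. Then every tree with a bare leaf is reducible, except the bare two-vertex tree for $S=\emptyset$, and the number of bare leaves drops at each step. The irreducible trees are exactly the proposed basis: the two trees for $S=\emptyset$, the one-vertex tree for $|S|=1$, the $\tau_n$ for $|S|=2$, and the metric stable trees for $|S|\ge 3$. Three kinds of ambiguity then have to be resolved.
\begin{itemize}
\item Relations for different $f$ at the same $(\tau',v)$ share their leading term, so they must agree up to sign. This requires orientations on the symbols $m(\tau)$, as in section \ref{sec_graph_com}, where $R(\tau',v,f)$ is half of $d$ applied to $\tau'$ with $w_v=2$, for every $f$. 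With unoriented symbols and $F_v^\circ$ ordered by labels, this fails. At the corolla with $S=\{1,2,3\}$, (\ref{eq_relations}) with $f=1$ and with $f=3$ gives two relations, each with coefficient $+1$ on the bare-leaf tree but with opposite coefficients on $\{1\mid 23\}$. That forces $\dim H^1_S\le 2$, whereas d) predicts $3$.
\item Bare leaves at two different vertices.
\item Two bare leaves at the same vertex. Such a tree is zero by an orientation-reversing automorphism, so you must show the remaining terms of its relation reduce to $0$.
\end{itemize}
In the last two cases the competing reductions produce the same trees. Their signs differ only through the order of the two new edges, $e\wedge e'\wedge\cdots$ versus $e'\wedge e\wedge\cdots$, and this is what makes them agree (respectively cancel). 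That sign bookkeeping is what you still have to write out to obtain independence.
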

\qed

In fact the first three identifications are isomorphisms of algebras, as we will explain now.

\begin{nparagraph}[Multiplicative structure.]
We will construct a commutative (not super-commutative) product on the spaces $H^\bullet_S$, compatible with the grading, and such that the element $m(\tau_0)$ for the single vertex tree $\tau_0$ with
tails marked by $S$ is the identity of $H^\bullet_S$. Notice, that these conditions already determine the multiplication on $H^\bullet_{\ubf{0}}$ and $H^\bullet_{\ubf{1}}$,
since the square of the degree $1$ element has to vanish. So from now on we will assume that $|S| \ge 2$.

Let $\sigma = \{S_1, S_2\}$ and $\sigma' = \{S'_1, S'_2\}$ be two partitions of $S$, we define a number $a(\sigma, \sigma')$ as the number of distinct non-empty sets among $(S_i \cap S'_j)$ for all $i, j$.
Assume that both partitions are non-trivial (i.e. both $S_1$ and $S_2$ are non-empty), then we have three possibilities:
\begin{enumerate}[label=\arabic*)]
\item $a(\sigma, \sigma') = 2$, which happens if and only if $\sigma = \sigma'$,
\item $a(\sigma, \sigma') = 3$, in which case we will say that partitions $\sigma$ and $\sigma'$ are {\em compatible},
\item $a(\sigma, \sigma') = 4$, in which case we will say that partitions $\sigma$ and $\sigma'$ are {\em incompatible}.
\end{enumerate}

If $e$ is either an internal edge or a tail of a tree $\tau$, then it determines a partition of the set of tails that we will denote by $\sigma(e)$.
According to proposition \ref{prop_basis_HS} it is enough to consider only trees without dead ends, in which case all the partitions $\sigma(e)$ are non-trivial.
Clearly for any two elements $e, f \in E(\tau) \cup \mathrm{Tail}(\tau)$ the number $a(\sigma(e), \sigma(f))$ is either $2$ or $3$.

Let $\sigma$ be a partition of $S$ and define the product $m(\sigma) m(\tau)$ for all $S$-trees $\tau$ as follows.
\begin{enumerate}[label=\alph*)]
\item If $\tau$ has an edge $e$ such that $\sigma$ and $\sigma(e)$ are incompatible, then $m(\sigma)m(\tau) = 0$.
\item If $\sigma$ is compatible with $\sigma(e)$ for all edges and tails then we can construct a tree $\wtilde\tau$ by inserting a new edge $e'$ determined by partition $\sigma$, and
we put $m(\sigma) m(\tau) = m(\wtilde\tau)$.
\item If $\tau$ has an edge or tail $e$ such that $\sigma = \sigma(e)$, then we define the product in one of the two equivalent ways.
\end{enumerate}

{\em Inserting.} Let $e = (v_1, v_2)$ be an edge of $\tau$ (not necessarily unique) with $\sigma(e) = \sigma$, then we construct a tree $\wtilde\tau$ as in case (b), for instance by
adding a new vertex ``in the middle of edge'' $e$ and connecting it only to vertices $v_1$ and $v_2$. Even though such position of the
new inserted vertex is not uniquely determined, the resulting tree doesn't depend on the choice of $e$, since all the edges of $\tau$ defining partition $\sigma$ are arranged
in a single chain.

{\em Transplanting.} Again let us choose an edge $e = (v_1, v_2)$ with $\sigma(e) = \sigma$. Denote by $F_{v_1}^\circ = F_{v_1} - \{e\}$, the set of flags adjacent to $v_1$ except
the one that forms the edge $e$, and similarly $F_{v_2}^\circ = F_{v_2} - \{e\}$. We put
\begin{equation}
\label{eq_transplant}
m(\sigma) m(\tau) = -{1 \over 2} \left( \sum_{F \subset F_{v_1}^\circ \atop |F| \ge 1} m(\tr(\tau, e, F)) + \sum_{F \subset F_{v_2}^\circ \atop |F| \ge 1} m(\tr(\tau, e, F)) \right).
\end{equation}
Here $\tr(\tau, e, F)$ denotes the tree obtained from $\tau$ by inserting a new vertex $v$ in the middle of edge $e$, connecting it to $v_1$ and $v_2$ and then
transplanting flags in $F$ from the original vertex to $v$.

$$
\begin{tikzpicture}[baseline=-0.25em]
\def\u{4em}
\node (v00) at (0,0.6*\u) {};
\node (v01) at (0,0.2*\u) {};
\node (v02) at (0,-0.2*\u) {};
\node (v03) at (0,-0.6*\u) {};
\node (v04) at (0,-1*\u) {$F_{v_1}^\circ - F$};
\node (v1) at (\u,0) [circle,fill,inner sep=1.5pt,label=below:$v_1$] {};
\node (v2) at (2*\u,0) [circle,fill,inner sep=1.5pt,label=below:$v$] {};
\node (v3) at (3*\u,0) [circle,fill,inner sep=1.5pt,label=below:$v_2$] {};
\node (v40) at (4*\u,0.6*\u) {};
\node (v41) at (4*\u,0.2*\u) {};
\node (v42) at (4*\u,-0.2*\u) {};
\node (v43) at (4*\u,-0.6*\u) {};
\node (v44) at (4*\u,-1*\u) {$F_{v_2}^\circ$};

\node (w1) at (1.6*\u, 1*\u) {};
\node (w2) at (2*\u, 1*\u) {};
\node (w3) at (2.4*\u, 1*\u) {};
\node (w4) at (2.6*\u, 0.9*\u) {$F$};

\draw (v00)--(v1);
\draw (v01)--(v1);
\draw (v02)--(v1);
\draw (v03)--(v1);
\draw (v1)--(v2) node [midway,below] {$e$};
\draw (w1)--(v2);
\draw (w2)--(v2);
\draw (w3)--(v2);
\draw (v2)--(v3) node [midway,below] {$f$};
\draw (v3)--(v40);
\draw (v3)--(v41);
\draw (v3)--(v42);
\draw (v3)--(v43);
\end{tikzpicture}
$$

To see that these two descriptions are equivalent, consider the tree $\tau'$ obtained from $\tau$ by contracting edge $e$ to vertex $v$.
Then the relation $R(\tau', v, f)$ contains all the trees from the first sum in (\ref{eq_transplant}) and in addition the tree where
no flags were transplanted to $v$. The latter is exactly the tree described in the inserting procedure. Applying the similar argument to the second sum
we see that both procedures give the same result.

From the description of the product of a partition and a tree it is clear that for any tree $\tau$ we have
$$
m(\tau) = \prod_{e \in E(\tau)} m(\sigma(e)).
$$
Therefore, it extends to the product of two arbitrary trees in $H^\bullet_S$. Moreover, we have the following description of $H^\bullet_S$.

For every non-trivial partition $\sigma$ we consider a symbol $D_\sigma$ of degree $1$ and let $\cF_S = \k[D_\sigma]$ be the free commutative algebra generated by $D_\sigma$ for all
non-trivial partitions $\sigma$.
\end{nparagraph}

\begin{proposition}
For a non-empty set $S$, let $I_S$ by the ideal of $\cF_S$ generated by the following two types of elements.
\begin{enumerate}[label=\alph*)]
\item Products $D_\sigma D_{\sigma'}$ for any two incompatible partitions $\sigma$ and $\sigma'$.
\item Linear combinations over all partitions $\sigma$ of $S^\circ = S - \{s\}$, for any choice of $s \in S$:
$$
\sum_{\sigma = \{S_1, S_2\}} (-1)^{|S_1||S_2|} \epsilon(S_1, S_2) D_{\sigma^+},
$$
where $\sigma^+$ is the partition of $S$ into $S_1$ and $S_2 \sqcup \{s\}$.
\end{enumerate}
Then we have an isomorphism of graded vector spaces $H^\bullet_S \isom \cF_S / I_S$, that sends $D_\sigma$ to $m(\sigma)$.

\end{proposition}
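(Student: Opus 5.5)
We will construct the map $\Psi\from \cF_S \to H^\bullet_S$, $D_\sigma \mapsto m(\sigma)$, and show that it is a surjective algebra homomorphism whose kernel is exactly $I_S$. The multiplicative structure on $H^\bullet_S$ constructed above is commutative and has $m(\tau_0)$ as its identity, so $\Psi$ extends uniquely from generators. Since $m(\tau) = \prod_{e \in E(\tau)} m(\sigma(e))$, every symbol $m(\tau)$ lies in the image, and $\Psi$ is surjective.

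\emph{Step 1: $I_S \subset \ker\Psi$.} Generators of type (a) are killed by case (a) of the definition of the product $m(\sigma)m(\tau)$: if $\sigma$ and $\sigma'$ are incompatible, then $m(\sigma)m(\sigma') = 0$. For a generator of type (b), apply the relation $R(\tau_0, v, f)$ to the one-vertex tree $\tau_0$, taking $f$ to be the tail labelled $s$. The trees $\tau'(\tau_0,v,s,F_1,F_2)$ are exactly the two-vertex trees with partition $\sigma^+ = \{F_1, F_2\sqcup\{s\}\}$, and the signs in (\ref{eq_relations}) agree with the stated ones. Hence the image of the generator is $R(\tau_0, v, s) = 0$ in $H^1_S$.

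The terms in which $F_1$ or $F_2$ is empty need care. When $F_1 = \emptyset$ the tree has a dead end. When $F_2 = \emptyset$ the partition $\sigma^+$ is trivial only if $S^\circ$ is empty. I would handle both through the convention that $D_\sigma$ for a trivial partition is identified with the corresponding dead-end tree, which vanishes by Proposition \ref{prop_basis_HS}, or is read as $D_{\{S,\emptyset\}}$.

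\emph{Step 2: injectivity of $\cF_S/I_S \to H^\bullet_S$.} This is the main obstacle. My plan is to construct an inverse map $H^\bullet_S \to \cF_S/I_S$ by $m(\tau) \mapsto \prod_{e\in E(\tau)} D_{\sigma(e)}$, with dead-end edges sent to $0$, and to check that it respects the defining relations $R(\tau, v, f)$.

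Fix a tree $\tau$, a vertex $v$ and a flag $f$. Each tree $\tau'(\tau,v,f,F_1,F_2)$ maps to $\prod_{e\in E(\tau)} D_{\sigma(e)}$ times $D_{\sigma_{F_1}}$, where $\sigma_{F_1}$ is the partition of $S$ cut out by the new edge. The partitions of $S$ induced by $F_1$ are unions of the blocks $B_g$, $g \in F_v^\circ$, into which $\tau$ minus $v$ splits $S$. So the sum $R(\tau,v,f)$ maps to $\prod D_{\sigma(e)}$ times a sum of $D$'s over the coarse partitions of $S$ that come from partitions of $F_v^\circ$.

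I would then show that $\prod D_{\sigma(e)}$ times the full relation (b), taken with $s$ in the block of $f$, reduces to exactly this sum modulo relations (a). The reason is that any partition $\sigma^+$ which splits some block $B_g$ nontrivially is incompatible with one of the edges $\sigma(e)$ of $\tau$. The delicate points are, first, showing this incompatibility claim for $\sigma^+$ splitting a block; second, the sign bookkeeping, since $\epsilon(F_1,F_2)$ must equal $\epsilon$ of the induced shuffle of $S$; and third, the case where the block of $f$ contains more than $s$. The case where $f$ is an internal edge should follow by the same argument, choosing $s$ in the block behind $f$.

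Once the inverse is well defined, the two maps are mutually inverse on generators, because the inverse sends $m(\sigma)$ back to $D_\sigma$. This gives the isomorphism of graded vector spaces.
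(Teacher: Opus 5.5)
\textbf{There is a genuine gap: dead-end trees are not zero in $H^1_S$.} Your Step 1 fails at the $F_1=\emptyset$ term. Let $T$ be the tree obtained by attaching a dead end to the vertex carrying all of $S$. Once $|S|\ge 2$, $T$ does \emph{not} vanish in $H^1_S$. The vanishing argument of paragraph \ref{par_basis_of_H} only covers the configuration (\ref{eq_dead_ends}), where the vertex carrying the dead ends has exactly one further flag. For $S=\ubf{2}$ the relation $R(\tau_0,v,2)$ has just the two terms $\tau_1$ and $T$, so $T=\pm\tau_1\neq 0$ by Proposition \ref{prop_basis_HS}.

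Consequently, if the undefined trivial term is dropped, the type (b) generator maps to $\mp T\neq 0$, and $I_S\not\subset\ker\Psi$. For $S=\ubf{2}$ this generator is just $\pm D_{\{1\},\{2\}}$. That would give $\cF_{\ubf{2}}/I_{\ubf{2}}=\k$, whereas $H^\bullet_{\ubf{2}}$ is one-dimensional in every degree. No choice of signs repairs this. For $|S|\ge 2$, Proposition \ref{prop_basis_HS} gives $\dim H^1_S=2^{|S|-1}-1$, which is exactly the number of generators $D_\sigma$, so $\ker\Psi$ has no degree-one part.

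The isomorphism can therefore only hold if the $S_1=\emptyset$ term of (b) is kept as the class of $T$. In that case (b) merely expresses $T$ through the $m(\sigma)$ and imposes nothing inside $\cF_S$. Your second hedge (reading the term as $D_{\{S,\emptyset\}}$) points this way, but you do not draw the conclusion, and it contradicts your Step 2.

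\textbf{Consequences for Step 2.} An inverse that sends dead-end edges to $0$ cannot work. Take $S=\ubf{3}$ and $\tau$ the two-vertex tree with blocks $\{1,2\}\mid\{3\}$. The relation $R(\tau,u,f)$ at the vertex $u$ carrying tail $3$ has only two terms:
\begin{itemize}
\item the three-vertex chain with a bivalent middle vertex, whose image is $D_{\{1,2\},\{3\}}^2$;
\item the tree with a dead end at $u$.
\end{itemize}
For your map to be well defined, $D_{\{1,2\},\{3\}}^2$ would have to vanish in the target. Yet this chain (tail $3$ of length $2$) is a basis element of $H^2_{\ubf{3}}$ by Proposition \ref{prop_basis_HS}.

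Your auxiliary incompatibility claim is also false. For $S=\ubf{4}$ and the tree with the single edge $\{1\}\mid\{2,3,4\}$, the partition $\{2,3\}\mid\{1,4\}$ splits the block $\{2,3,4\}$ but is compatible with that edge. So multiplying a global relation (b) by $\prod D_{\sigma(e)}$ collects splittings at every vertex, not just at $v$.

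\textbf{What actually needs proving.} You need to show that the monomials in pairwise compatible $D_\sigma$ (equivalently, the metric stable trees) are linearly independent in $H^\bullet_S$. In that argument, a dead end at $v$ should be replaced by minus the remaining terms of the local relation at $v$, not sent to $0$. The paper does not carry this out either; it defers to the case-by-case analysis in \cite{Manin}, chapter 3, section 4.
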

\proof
The proof of this proposition is essentially the repeat of the case by case analysis performed in \cite{Manin} chapter 3, section 4, and replacing usage of Keel's relations
with relations (\ref{eq_relations}).
\qed

\vskip 5em
\section{Weighted graph complex}
\label{sec_graph_com}

The constructions of the previous section can be seen as the tree part (genus zero part) of a version of the graph complex. We begin by recalling relevant notions
to fix the notations.

Consider a graph $G = (V, F, \delta, j)$ (we keep the same notation as in paragraph \ref{par_ribbon_graph}, without the cyclic order of flags around each vertex).
An orientation of $G$ is a generator of the highest exterior power of the free abelian group generated by all internal edges and tails of $G$.
$$
\rmor \in \bigwedge \Z^{E(G) \cup \mathrm{Tail}(G)}.
$$
The purpose of the orientation is to take care of the signs of graphs obtained by contracting or inserting edges.

A weighted modular graph is a graph $G$ equipped with the additional data consisting of $g_v \in \Z_{\ge 0}$ and $w_v \in \Z_{>0}$ for each vertex $v \in V$. We will
refer to $g_v$ as the genus of the vertex $v$, and $w_v$ as its weight. The purpose of numbers $g_v$ is to keep track of the total genus of the graph as we
contract its edges, and the purpose of $w_v$ is to keep track of the multiplicities of vertices. In particular we define the total genus and total weight of a
weighted modular graph as

$$
g(G) = \sum_{v \in V} g_v + \dim H^1(|G|), \quad\quad w(G) = \sum_{v \in V} w_v,
$$
where $|G|$ denotes the geometric realization of $G$.

\begin{nparagraph}
Denote by $C_n$ the free abelian group generated by pairs $(G, \rmor)$, where $G$ is (a representative of) an isomorphism class of connected weighted modular graphs with $n$ tails
and $\rmor$ is its orientation, modulo relations $(G, -\rmor) = -(G, \rmor)$. We consider a grading of $C_n$ by the number of internal edges of $G$ and denote by
$C_n^p$ the corresponding component of degree $p$. Furthermore, it is graded by the total weight and the total genus of $G$ and we will write $C_{g, n, w}^p$ when
it is necessary to specify this additional grading.

Consider a differential on $C_n$ of degree $+1$ defined by expanding a vertex of a graph into a new edge in all possible ways.
$$
d(G, \rmor) = \sum (\wtilde G, e \wedge \rmor),
$$
where the sum is taken over all weighted modular graphs $\wtilde G$ with an edge $e$ connecting some vertices $v$ and $v'$, such that
\begin{enumerate}[label=\alph*)]
\item by removing the edge $e$ and identifying vertices $v$ and $v'$ we obtain graph $G$;
\item let $\bar v$ be the common image of vertices $v$ and $v'$ then $w_{\bar v} = w_v + w_{v'}$ and
$$
g_{\bar v} = \begin{cases}
g_v + g_{v'},\quad\textrm{if } v \neq v'\\
g_v + 1, \quad\textrm{if } v = v';
\end{cases}
$$
\item for all other vertices $u$ of $G$ the weight and genus are the same as in $\wtilde G$.
\end{enumerate}

Clearly this differential is compatible with grading of $C_n^\bullet$ by both weight and genus. Let us restrict to the subcomplex $C^\bullet_{0,n,w}$ of genus zero,
which is spanned by trees, such that genus of every vertex is zero.
\end{nparagraph}

\begin{proposition}
For all $n \ge 1$ we have isomorphisms
$$
H^p_{\ubf{n}} \ \isom\  \bigoplus_{w \ge 1} H^p(C^\bullet_{0,n,w} \tensor \k).
$$
Moreover, for each $w \in \Z_{> 0}$ the cohomology of $C^\bullet_{0,n,w}$ is concentrated in degree $p = w - 1$, and is spanned by trees
with weights of all vertices equal to $1$.
\end{proposition}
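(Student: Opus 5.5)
The plan is to treat the two claims separately: first identify the top-degree cohomology of $C^\bullet_{0,n,w}$ with the corresponding graded piece of $H^\bullet_{\ubf{n}}$ directly from the definitions, then prove that $C^\bullet_{0,n,w}$ is acyclic in all degrees $p < w-1$.

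\emph{Degree bound.} In $C^\bullet_{0,n,w}$ every vertex has weight $w_v \ge 1$, so a tree has at most $w$ vertices. Since degree is the number of internal edges, which is one less than the number of vertices of a tree, the complex lives in degrees $0 \le p \le w-1$. The top degree $p = w-1$ is spanned exactly by trees with all vertex weights equal to $1$. Hence
$$
H^{w-1}(C^\bullet_{0,n,w}) = C^{w-1}_{0,n,w}\big/ d\,C^{w-2}_{0,n,w}.
$$

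\emph{Identification of the image of $d$.} A tree in degree $w-2$ has exactly one vertex $v$ of weight $2$ and all other weights $1$. Applying $d$ splits $v$ into two weight-one vertices joined by a new edge $e$, distributing $F_v$ over them in all ways. Fix a flag $f \in F_v$; this is where I use $n \ge 1$ (or, when $v$ has no flags, one must check the splitting separately). Each unordered splitting of $F_v$ is then labelled uniquely by the part $F_1 \subset F_v^\circ$ that is not on the side of $f$. Thus $d(\tau)$ is, term by term, the relation $R(\tau, v, f)$ of (\ref{eq_relations}).

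The sign $(-1)^{|F_1||F_2|}\epsilon(F_1,F_2)$ should correspond to $e \wedge \rmor$ once one fixes the dictionary between planar decorations and orientations in $\bigwedge \Z^{E\cup \mathrm{Tail}}$. Checking this sign dictionary is routine but must be done carefully. Since trees with $p$ edges have $p+1$ vertices, the weight-one trees in degree $p$ all lie in $C^\bullet_{0,n,p+1}$. This gives $H^p_{\ubf{n}} \isom H^{p}(C^\bullet_{0,n,p+1}\tensor\k)$, and all other summands vanish once acyclicity below the top degree is known.

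\emph{Acyclicity below top degree (main obstacle).} I would interpret $C^\bullet_{0,n,\bullet}$ as the (co)bar complex of the operad $\mathcal P$ with $\mathcal P(k)$ spanned by $t^{w}$, $w \ge 1$, with composition multiplying weights. Vertex splitting is dual to this composition, so concentration on the diagonal $p = w-1$ is precisely Koszulness of $\mathcal P$ with respect to the weight grading, with generators $t$ in each arity.

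To prove it directly, I would try induction on $w$ using the vertex $v_s$ carrying a fixed tail $s$. Filter by the weight of $v_s$, or equivalently split the complex according to whether $w_{v_s}=1$. Then try a contracting homotopy $h$ that contracts the edge at $v_s$ pointing toward a canonically chosen neighbour, for instance the one whose subtree contains the smallest tail, merging weights. The goal is that $dh + hd$ equals the identity plus terms of lower filtration.

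If a clean homotopy does not emerge, the fallback is a spectral sequence. Take the filtration by the number of vertices of weight $\ge 2$. The $E_1$ page should reduce to tensor products of local complexes for a single vertex of weight $m$ blown up into trees of weight-one vertices. Each local complex is the simplicial cochain complex of a contractible poset of weight compositions, hence acyclic except in its top degree, and the spectral sequence then collapses.
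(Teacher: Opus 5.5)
Your treatment of the top degree is exactly the paper's argument. A tree of degree $w-2$ has a single weight-$2$ vertex $v$, which carries a flag $f$ because $n\ge 1$. Applying $d$ to it reproduces $R(\tau,v,f)$ (the paper says ``the double'' of it, which is harmless over $\k$), and this gives $H^p_{\ubf{n}}\cong H^p(C^\bullet_{0,n,p+1}\otimes\k)$.

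For vanishing below the top degree, the paper does nothing beyond invoking ``Koszulity of the operad $\Lie_\infty$''. This means Koszulity of the quadratic operad $\mathcal L$ with one generator $\ell_k$ in each arity $k\ge 0$ and the higher Jacobi identities (\ref{eq_Jacobi}) as relations. Its quadratic dual is your $\mathcal P$, and a quadratic operad is Koszul iff its dual is, so your reformulation uses the same input. Had you simply cited it, your proof would coincide with the paper's.

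The trouble is that you set out to prove this Koszul property, and none of your routes does so as written.
\begin{itemize}
\item The filtration by the number of vertices of weight $\ge 2$ is not preserved by $d$: splitting a weight-$2$ vertex as $1+1$ lowers that number, while splitting a weight-$4$ vertex as $2+2$ raises it.
\item The ``local complex'' of a weight-$m$ vertex with $k$ flags blown up into trees is just $C^\bullet_{0,k,m}$ again, so asserting its acyclicity below the top degree is the statement itself.
\item ``Contractible poset, hence acyclic except in its top degree'' is not a valid inference. Contractibility would put all the cohomology in degree $0$. Concentration in the top degree is Cohen--Macaulayness, which is Vallette's poset criterion for Koszulness of set operads, and it would have to be checked for $\mathcal P$.
\item Your first idea, filtering by the weight of the vertex carrying $s$, is legitimate, since $d$ can only lower that weight. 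By induction on $w$ each branch contributes only top-degree classes, so $\mathrm{gr}_q$ has cohomology only in degree $w-q$ and the spectral sequence degenerates at $E_2$. But $(E_1,d_1)$ is then the (truncated) Koszul complex of $\mathcal L$, whose exactness is again equivalent to Koszulness.
\item The contracting homotopy you hope for is never actually constructed.
\end{itemize}

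So the acyclicity step is not established by your proposal. Either take Koszulness as an external input, as the paper does, or supply a genuine proof of it, for example via a PBW/quadratic Gr\"obner basis or a shellability argument for the partition posets of $\mathcal P$.
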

\proof
Consider a tree $\tau$ with $p$ vertices of total weight $p + 1$, so that there is exactly one vertex $v$ of weight $2$, while the rest of them are of weight $1$. Notice that
there is at least one flag attached to $v$, due to the assumption that $n \ge 1$ and connectedness of the tree. Applying the differential
of the graph complex $C^\bullet$ to $\tau$ we obtain a linear combination of trees which coincides with the (double of the) relation $R(\tau, v, f)$ in (\ref{eq_relations}) for any choice of flag $f \in F_v$.
Since the number of internal edges of a tree with $p + 1$ vertex is equal to $p$, we conclude that the cohomology $H^p(C^\bullet_{0, n, p+1} \tensor \k) \isom H^p_{\ubf{n}}$.

The vanishing of all the other cohomology groups $H^p(C^\bullet_{0,n,w} \tensor \k)$ for $p < w - 1$ is a consequence of the Koszulity of the operad $\Lie_\infty$.
\qed

\begin{remark}
For $n = 0$ the two-vertex tree spanning $H^1_{\ubf{0}}$ (see paragraph \ref{par_basis_of_H}) is killed in the graph complex by the image of the tree with a single vertex of weight two.
At the level of $\Lie_\infty$-algebra this imposes relation $(c, c) = 0$, where $c$ is the curvature element. In other words this corresponds to the formal isotropic $L$-manifolds.
\end{remark}

\vfill\eject

\end{document}